\newcommand{\R}{\mathcal{R}}
\newcommand{\F}{\mathcal{F}}
\newcommand{\RR}{\mathbb  R}
\newcommand{\TT}{\mathbb  T}
\newcommand{\Z}{\mathbb  Z}
\numberwithin{equation}{section}
\newtheorem{theorem}[equation]{Theorem}
\newtheorem{definition}[equation]{Definition}
\newtheorem{proposition}[equation]{Proposition}
\newtheorem{cor}[equation]{Corollary}
\newtheorem{lemma}[equation]{Lemma}
\newtheorem{problem}[equation]{Problem}
\newtheorem{remark}[equation]{Remark}
\newtheorem{claim}[equation]{Claim}
\begin{document}
\title[Discrete Directional Maximal Functions]{Discrete Analogues in Harmonic Analysis: Directional Maximal Functions in $\mathbb{Z}^2$}
%Enter your title between curly braces
\author{Laura Cladek}
\address{Department of Mathematics, UCLA\\
520 Portola Plaza, Los Angeles, CA 90095
}
\email{cladekl@ucla.edu}
\date{\today}

\author{Ben Krause}
\address{
Department of Mathematics,
Caltech \\
Pasadena, CA 91125}
\email{benkrause2323@gmail.com}
\date{\today}

\begin{abstract}
Let $V = \{ v_1,\dots,v_N\}\subset\mathbb{Z}^2$ be a collection of $N$ vectors that live near a discrete sphere. We consider discrete directional maximal functions on $\mathbb{Z}^2$ where the set of directions lies in $V$, given by
\[ \sup_{v \in V, k \geq C \log N} \left| \sum_{n \in \mathbb{Z}} f(x-nv ) \phi_k(n) \right|, \ f:\mathbb{Z}^2 \to \mathbb{C}, \]
where $\phi_k(t) := 2^{-k} \phi(2^{-k} t)$ for some bump function $\phi$. Interestingly, the study of these operators leads one to consider an ``arithmetic version'' of a Kakeya-type problem in the plane, which we approach using a combination of geometric and number-theoretic methods. Motivated by the Furstenberg problem from geometric measure theory, we also consider a discrete directional maximal operator along polynomial orbits,
\[ \sup_{v \in V} \left| \sum_{n \in \mathbb{Z}} f(x-v\cdot P(n) ) \cdot \phi(n) \right|, \ P \in \mathbb{Z}[-].  \]
\end{abstract}

\maketitle

 \setcounter{tocdepth}{1}
\tableofcontents 

\section{Introduction}

%The topic of this paper is discrete harmonic analysis, harmonic analysis on the integers. 
Discrete analogues of (continuous) operators in harmonic analysis has been an active area of research since Bourgain initiated their study in the course of his work on pointwise ergodic theorems in the late 80s and early 90s, \cite{B1,B2,B3}. In particular, motivated by pointwise ergodic theoretic concerns (cf.\ Calder\'{o}n's transference principle \cite{C}), the initial focus of discrete harmonic analysis was in understanding maximal averaging operators along polynomial orbits, say
\begin{equation}\label{e:Zsquares}
\sup_k \left| \frac{1}{2^k} \sum_{n \leq 2^k} f(x-n^2) \right|, \ f : \mathbb{Z} \to \mathbb{C}.
\end{equation}

Although the analogous Euclidean operator,
\begin{equation}\label{e:Rsquares}
\sup_k \left| \frac{1}{2^k} \int_{0}^{2^k} f(x-t^2) \right|, \ f : \mathbb{R} \to \mathbb{C}
\end{equation}
is governed by the (continuous) Hardy-Littlewood maximal operator by a change of variables, there is no such connection between \eqref{e:Zsquares} and the discrete Hardy-Littlewood maximal function: averaging over the set of the squares
\[ \{ n^2 : n \geq 1\} \]
is more akin -- from a density perspective -- to a continuous averaging operator over a set of lower dimension. Although developing an appropriate $\ell^p$ theory for these averages, or for the related singular integral formulation, required delicate analysis, necessitated by arithmetic complications unique to the discrete setting, the theory of polynomial Radon transforms is essentially complete: aside from Bourgain's work, major contributions to the field were made by Magyar, Stein, and Wainger \cite{MSW}, by Ionescu and Wainger \cite{IW}, and more recently by Mirek, Stein, and Trojan \cite{MST1,MST2}, and by Mirek, Stein, and Zorin-Kranich \cite{MSZK}.

In light of the efforts of the above authors and others, the field has developed sufficiently robust tools that discrete analogues of more complicated questions can be meaningfully posed: in \cite{KL} and \cite{BK}, the second author, partially in collaboration with Lacey, initiated a study into discrete analogues of maximally modulated oscillatory singular integrals, of the type considered by Stein \cite{S} and Stein-Wainger \cite{SW}; in \cite{KesLac}, \cite{Kes}, and \cite{KLM}, Kesler-Lacey, Kesler, and Kesler-Lacey-Menas establish $\ell^p$-improving (and sparse) estimates for spherical averages and maximal functions are established, in analogy with the work of Schlag \cite{Sch}.

The purpose of this paper is to begin an investigation into discrete analogues of directional maximal functions in the plane; we provide a brief summary of the ($L^2$-) planar theory, and refer the reader to the recent preprint of Di Plinio and Parissis \cite{DpP} for a more comprehensive discussion.

The initial interest interest in the directional maximal function in the plane,
\[ M_V f(x) := \sup_{v \in V \subset \mathbb{S}^1} \left| \int_0^1 f(x - t\cdot v) \ dt \right|, \ f : \mathbb{R}^2 \to \mathbb{C}, \]
is due to its connection with the Kakeya maximal function over $1/N \times 1$ tubes when $|V| = N$; in particular, when $V$ is uniformly distributed, the two maximal functions both have operator norm on $L^2$ given by a universal constant times $\log^{1/2} N$, due to \cite{Cor} (sharpness follows from the existence of Kakeya sets in the plane). For general sets of directions, $V \subset \mathbb{S}^1$, the bound of $\log^{1/2} N$ was proven by Katz in \cite{Katz}, and re-proven by Demeter in \cite{D}; we begin our study of the discrete directional maximal functions with the aim of understanding the (single-scale) maximal average in the plane,
\begin{equation}\label{e:Aphi}
A_{V,\phi_k}f(x) := \sup_{v \in V} \left| \sum_{n \in \mathbb{Z} } f(x -nv) \phi_k(n) \right| 
\end{equation}
where $\phi$ is some bump function (say) on the real line, $\phi_k(x) := 2^{-k} \phi(2^{-k} x)$ are the usual $L^1$-normalized dilations, and for some fixed $A$,
\[ V \subset \{ |x| \approx A, x \in \mathbb{Z}^2 \},\]
is a collection of $|V| = N$ vectors.\footnote{In stricter analogy with the Euclidean case, one might expect to be able to select vectors on a (discrete) circle; the irregular distribution of lattice points introduces significant technical complications.}
(Because our estimates will be uniform over appropriately normalized bump functions, $\phi$, for notational ease we will often suppress the dependence of our maximal operators on these functions, writing, for instance, $A_{V,k} = A_{V,\phi_k}$.) In particular, we are after the $\ell^2$ operator norm of \eqref{e:Aphi}. Upon first consideration, this question is not very interesting: by considering bumps adapted to unit scales, we see that a sharp estimate for $k$ close to $1$ is given by $N^{1/2}$. However, unlike the Euclidean setting, this problem is not scale-invariant, which leads to the natural question:
\begin{problem}\label{p:indep}
For collections of vectors $V$ of cardinality $N$, can one find a scale $k = k(N)$ beyond which
\begin{equation}\label{e:MPop}
\| A_{V,k} \|_{\ell^2(\mathbb{Z}^2) \to \ell^2(\mathbb{Z}^2)}
\end{equation}
is essentially independent of $|V| = N$? 
\end{problem}

Although the continuous analogue of this problem could be (essentially) answered via a $TT^*$ argument, see \cite{Ba}, neither this approach, nor the more delicate of analysis of Katz, is appropriate in the discrete setting, where the geometry of the problem becomes radically different; for instance, in general, non-parallel lines 
\[ \{ m + n v : n \in \mathbb{Z} \}, \ \{ m' + n v' : n \in \mathbb{Z} \} \subset \mathbb{Z}^2, \ v \neq v'  \]
will \emph{not} intersect. Passing to Fourier space connects these geometric issues with arithmetic ones, and our answer to Problem \ref{p:indep} requires an interplay between these two perspectives: although a universal answer to Problem \ref{p:indep} seems out of reach of current techniques, we are able to construct sets of vectors for which we can answer this question quite thoroughly. In particular, we will show that for each $\epsilon > 0$ and $N \geq 1$, there exist collections of vectors $V = V_{N,\epsilon}$ so that $V$ lives near a discrete sphere, has $|V| = N$, and so that $A_{V,k}$ has $\ell^2(\mathbb{Z}^2)$ operator norm bounded by a constant multiple of $|V|^{\epsilon}= N^{\epsilon}$:
\begin{theorem}\label{linthm}
Let $\epsilon >0$ and $N>0$. Then there exists absolute constants $C_\epsilon$ and $C_0=C_0(\epsilon)$ so that the following holds:

There exists a collection $V=V_{N, \epsilon}\subset\mathbb{Z}^2$ of vectors satisfying $|V|=N$, $V\subset\{z\in\mathbb{Z}^2:\,|z|\approx A\}$ for any $A$ satisfying $A\ge N^{C_0}$, and we have the estimate
\begin{equation}\label{e:singscaleest}
\|A_{V, k}\|_{\ell^2(\mathbb{Z}^2)\to\ell^2(\mathbb{Z}^2)}\leq C_\epsilon \cdot N^{\epsilon}
\end{equation}
provided that $2^k \geq N^{C_1}$ for some $C_1=C_1(A)$ sufficiently large.
\end{theorem}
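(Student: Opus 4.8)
The plan is to construct $V$ explicitly so that the arithmetic of the directions is maximally spread out, then estimate $\|A_{V,k}\|_{\ell^2\to\ell^2}$ by passing to Fourier space and exploiting an almost-orthogonality between the pieces of the multiplier attached to distinct directions. Concretely, I would take the directions $v_j$ to be of the form $v_j = (a_j, b_j)$ with $a_j, b_j$ chosen so that the slopes $b_j/a_j$ (in lowest terms $p_j/q_j$) have denominators $q_j$ that are pairwise coprime and of controlled size — for instance by selecting $q_j$ to be distinct primes in a dyadic window, and then perturbing to land near the sphere $\{|z|\approx A\}$ for $A \geq N^{C_0}$. This is the role of the lower bound $A \geq N^{C_0}$: one needs enough room on the discrete sphere to fit $N$ vectors whose slope-denominators are simultaneously coprime and not too large. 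The requirement $2^k \geq N^{C_1}$ then guarantees that the average $\sum_n f(x-nv)\phi_k(n)$ is a genuinely multi-scale object relative to these denominators, so that the single-scale trivial bound $N^{1/2}$ is not forced.

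Next I would linearize the supremum: write $A_{V,k}f(x) = |\sum_n f(x - n v(x))\phi_k(n)|$ for a measurable choice $x \mapsto v(x) \in V$, and bound the $\ell^2$ norm by duality, reducing to controlling $\sum_{j} \langle A_{V_j, k} f, g_j\rangle$ where $V_j = \{v_j\}$ and the $g_j$ partition space according to which direction attains the sup. Taking Fourier transforms, the multiplier for direction $v_j$ at frequency $\xi \in \mathbb{T}^2$ is $m_j(\xi) = \sum_n \phi_k(n) e(n\, v_j\cdot\xi)$, which by Poisson summation / the standard circle-method decomposition is concentrated where $v_j \cdot \xi$ is close to a rational $a/q$ with small $q$; the coprimality of the denominators $q_j$ ensures these "major arc" regions for different $j$ are essentially disjoint in $\mathbb{T}^2$, up to the global arc near $\xi = 0$ (where one uses the continuous directional maximal estimate of Córdoba/Katz, contributing the harmless $\log^{1/2} N \ll N^\epsilon$, together with a transference argument). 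Off the major arcs, $|m_j(\xi)|$ decays and a Cotlar–Stein / $TT^*$ style estimate on the almost-orthogonal pieces gives the bound.

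The main obstacle — and the technically heaviest step — will be controlling the interaction of the non-principal major arcs across different directions: even with coprime denominators, the lattice points $\xi$ where $v_j\cdot\xi \approx a_j/q_j$ and $v_{j'}\cdot\xi \approx a_{j'}/q_{j'}$ simultaneously form a sublattice whose density must be shown to be small, and this is exactly the "arithmetic Kakeya" phenomenon alluded to in the abstract: one is counting lattice points in the intersection of two families of arithmetic tubes. I expect to need a careful count, using the coprimality to invoke a CRT-type argument bounding the number of $(j,j')$ pairs whose arcs overlap at a given frequency scale, combined with a geometric input bounding how many of the tubes (thought of as $1/q \times 1$ slabs in a suitable rescaling) can pass through a fixed point — the discrete analogue of the bush argument. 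Assembling these, the overlap function is $O(N^\epsilon)$ pointwise, which after summing the almost-orthogonal blocks yields \eqref{e:singscaleest}. The remaining steps (verifying the explicit $V_{N,\epsilon}$ lies near the sphere, choosing $C_0, C_1$, and the routine multiplier estimates on minor arcs via van der Corput / Weyl sums) are standard given the tools of Magyar–Stein–Wainger and Ionescu–Wainger cited above.
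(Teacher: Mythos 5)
Your shell --- low frequencies by Magyar--Stein--Wainger transference to the continuous directional theory, high frequencies by an arithmetic overlap/incidence estimate --- matches the paper's outline, but the proposal has two genuine gaps exactly where the hard content lives. The construction you suggest (slopes with pairwise coprime prime denominators $q_j$) is not the paper's and does not obviously produce the required mechanism. The paper instead takes a pool $P_M$ of about $N^{\epsilon/2}$ primes near $N^{M/\epsilon}$, fixes $\kappa\approx\epsilon^{-1}$ with $\binom{N^{\epsilon/2}}{\kappa}\approx N$, and builds each direction $v_i$ so that its normalized coordinates carry a \emph{product} $p_{i_1}\cdots p_{i_\kappa}$ of $\kappa$ distinct primes from $P_M$, no two directions sharing the full $\kappa$-set. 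The decisive consequence is a pigeonhole claim: every $N^{\epsilon}$-subcollection $S\subset V$ contains $\gtrsim N^{\epsilon^2/2}$ disjoint pairs, each pair sharing a common prime from $P_M$. It is precisely those shared primes that impose a divisibility constraint on the rescaled $x$- (and $y$-) projections of the comb intersections $K_{r,v}\cap K_{r',v'}$; multiplying those constraints over about $C_1/1000$ pairs produces a modulus $\gg A^2$, which forces the total intersection into $B(0,A^{-1})$, excluded by the definition of the combs. With one distinct prime per direction, as you propose, no two directions share a prime and this mechanism simply disappears --- pairwise coprimality of the $q_j$ does not force small overlap, and CRT in fact lets you \emph{simultaneously} place a frequency $\beta$ in the major arcs of many directions.

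The second gap is that the incidence estimate itself --- \S 2 of the paper, and the heaviest step, as you yourself flag --- is left as an aspiration (``a careful count\ldots CRT-type argument\ldots bush argument''). The paper's argument there is not a bush or angular count at all: it is the purely arithmetic divisibility argument just sketched, resting on (i) the engineered prime structure of $V$ and the counting claim above, and (ii) the geometric reduction that $K_{r,v}\cap K_{r',v'}$ lies in an $O(N^{-C_1/2})$-neighborhood of a rank-$2$ lattice whose $x$-projection is a two-parameter arithmetic progression with the correct denominator. Without both the construction and that divisibility argument, the proof does not close. Two smaller remarks: on the high-frequency piece the paper does not linearize the supremum or invoke Cotlar--Stein --- it dominates $\sup_v$ by $\bigl(\sum_v |\cdot|^2\bigr)^{1/2}$, applies Plancherel, and bounds $\bigl\|\sum_v \mathbf{1}_{K^A_{k,v}}\bigr\|_{L^\infty}$ directly; and the hypothesis $A\ge N^{C_0}$ is not merely ``room on the sphere'' but is what makes the rescaled directions integer vectors, fixes the tube thickness and spacing in $A\mathbb{T}^2$ relative to the prime sizes, and supplies the closing inequality $N^{C_1/100}\gg A^2$.
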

In fact, this construction is sufficiently robust to allow for a supremum over scales to be introduced.
To this end, define
\begin{equation}\label{e:scales-max}
    A^*_{V}f(x):= \sup_{2^k \geq N^{C_1}, v \in V} \left| \sum_n f(x-nv)\phi_k(n) \right|
\end{equation}
\begin{cor}\label{cor:multiscalemax}
In the setting of Theorem \ref{linthm}, \eqref{e:singscaleest} is satisfied by $A^*_{V}$ as well.
\end{cor}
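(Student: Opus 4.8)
The plan is to upgrade the single-scale bound \eqref{e:singscaleest} to the full supremum over $2^k \geq N^{C_1}$ by a standard square-function (or $\ell^2$-summation) argument, exploiting the fact that the scales are dyadically separated and that the constant in Theorem \ref{linthm} is a genuine power $N^\epsilon$ with $\epsilon$ arbitrary. First I would observe that, writing $K_0 := \lceil C_1 \log_2 N \rceil$, one has the trivial domination
\[
A^*_V f(x) \;=\; \sup_{k \geq K_0,\, v \in V} \left| \sum_n f(x-nv)\phi_k(n) \right| \;\leq\; \left( \sum_{k \geq K_0} \big| A_{V,k} f(x) \big|^2 \right)^{1/2},
\]
so that $\|A^*_V f\|_{\ell^2}^2 \leq \sum_{k \geq K_0} \|A_{V,k} f\|_{\ell^2}^2$. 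This crude bound alone is not enough, since there are infinitely many scales; the point is that for scales $k$ much larger than $\log N$ the vectors in $V$ become essentially irrelevant and the operator $A_{V,k}$ is controlled, uniformly in $v$, by (a dilate of) the one-dimensional Hardy–Littlewood maximal function pulled back along the line $\mathbb{Z}v$, composed with a maximal function in the transverse direction; more precisely, for a single fixed $v$, $\sup_k |\sum_n f(x-nv)\phi_k(n)|$ is bounded on $\ell^2(\mathbb{Z}^2)$ by an absolute constant, and one gains an extra off-diagonal decay $2^{-\delta(k-k')}$ in the inner products $\langle A_{V,k}^* A_{V,k'} f, f\rangle$ coming from the Fourier support of $\widehat{\phi_k}$.

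The key steps, in order, are: (i) record the pointwise bound above and reduce to estimating $\sum_{k \geq K_0}\|A_{V,k}f\|_{\ell^2}^2$; (ii) split the sum into the "short" range $K_0 \leq k \leq C_2 \log N$ and the "long" range $k > C_2 \log N$; (iii) on the short range, apply Theorem \ref{linthm} (with $\epsilon$ replaced by $\epsilon/2$) to each of the $O(\log N)$ scales, absorbing the logarithmic loss into $N^{\epsilon/2} \cdot (\log N)^{1/2} \lesssim_\epsilon N^\epsilon$; (iv) on the long range, discard the supremum over $v$ by noting $|A_{V,k}f| \leq \sup_{v\in V}\mathcal{M}_v^{(k)} f$ where $\mathcal{M}_v^{(k)}$ is the single-direction average, and then run a $TT^*$/almost-orthogonality argument scale-by-scale: the operators $A_{V,k}$ for $k$ in the long range are uniformly $\ell^2$-bounded with norm $O(N^\epsilon)$ by the same construction (the proof of Theorem \ref{linthm} does not use an upper bound on $k$), and the Fourier-analytic separation of their symbols gives $\|A^*_{V,\mathrm{long}} f\|_{\ell^2} \lesssim N^\epsilon \|f\|_{\ell^2}$ directly, without summing a divergent series. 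Combining (iii) and (iv) yields \eqref{e:singscaleest} for $A^*_V$.

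The main obstacle I anticipate is step (iv): making rigorous the claim that the construction of $V$ in Theorem \ref{linthm} continues to furnish a bound uniform in $k$ across the entire long range, and then extracting genuine almost-orthogonality (an honest $2^{-\delta|k-k'|}$ decay on the cross terms $\langle A_{V,k}^* A_{V,k'}\rangle$) rather than merely a uniform per-scale bound — a uniform per-scale bound times infinitely many scales is useless, so one really needs the oscillation of $\widehat{\phi}$ to produce summable decay, and one must check that the arithmetic/geometric estimates underlying Theorem \ref{linthm} are compatible with this decay uniformly over $v \in V$. If the proof of Theorem \ref{linthm} in fact proceeds via such a per-scale analysis with built-in decay (which is typical for these discrete directional problems), this is essentially bookkeeping; otherwise one falls back on the cruder route of combining the uniform short-range bound with a separate, self-contained argument (e.g.\ a vector-valued Hardy–Littlewood estimate in the direction $v$ together with a maximal estimate in the orthogonal direction) to handle $k \gg \log N$, at the cost of a slightly more involved but still routine verification.
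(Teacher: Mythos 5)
Your proposal takes a genuinely different — and, as written, incomplete — route. You begin by square-summing in the scale parameter $k$ rather than in the direction $v$, which is precisely what creates the infinite-sum problem you then try to repair. The step you yourself flag, step (iv), is a real gap: the asserted $2^{-\delta|k-k'|}$ almost-orthogonality would have to survive the linearization of the supremum over $v$, i.e.\ hold for operators of the form $f\mapsto\sum_{v\in V}\mathbf 1_{E_v}(x)\,A_{v,k}f(x)$ with $\{E_v\}$ an arbitrary measurable partition of $\mathbb Z^2$. Fourier separation of the convolution symbols $\widehat{\phi_k}(v\cdot\beta)$ does not automatically pass through the multiplicative indicator factors $\mathbf 1_{E_v}$, and in any case $\phi$ is merely a bump function, so $\widehat{\phi_k}$ is not compactly supported and tails would need to be tracked. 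None of this is established, and your fallback is left undeveloped. The short/long split is also unmotivated: both ranges lie in the regime $2^k\ge N^{C_1}$ where Theorem \ref{linthm} applies, so there is no intrinsic distinction, and paying a $\log N$ factor on a finite block does not address the unbounded tail in $k$.

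The paper avoids the sum over $k$ entirely. For each fixed $v$, the supremum over scales $j\ge k$ is dominated pointwise by a single-scale average at scale $k$ post-composed with the directional Hardy--Littlewood maximal operator $M^v f(x):=\sup_N \frac1N\big|\sum_{n\le N}f(x-nv)\big|$:
\[
\sup_{j\ge k}\Big|\sum_n f(x-nv)\,\phi_j(n)\Big|\;\lesssim\;M^v\Big(\sum_n f(\cdot-nv)\,\tilde\phi_k(n)\Big)(x)
\]
for an appropriate pair of bumps $\phi,\tilde\phi$. Since $M^v$ is bounded on $\ell^2(\mathbb Z^2)$ \emph{uniformly in} $v\in\mathbb Z^2$ (via Hopf--Dunford--Schwartz, or the $TT^*$ argument the paper records in Proposition \ref{p:TT*}), the square sum in $v$ of the multi-scale maximal averages has $\ell^2$ norm dominated by that of the single-scale square function $\big(\sum_v\big|\sum_n f(\cdot-nv)\tilde\phi_k(n)\big|^2\big)^{1/2}$, which is exactly what the proof of Theorem \ref{linthm} already controls by $N^\epsilon\|f\|_{\ell^2}$. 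Your closing remark about a vector-valued Hardy--Littlewood estimate in the direction $v$ is essentially this observation; it requires no companion estimate in the orthogonal direction, and had you pursued it instead of the almost-orthogonality plan, you would have landed on the paper's short proof.
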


\subsection{Discrete Maximal Functions along Sparse Sequences}
We will also study discrete analogues where the geometry of the corresponding Euclidean problem in the plane is much less understood. While the Kakeya conjecture in the plane is fully resolved, very little progress has been made toward understanding the related Furstenberg conjecture in the plane. 
\newline
\indent

Recall that a \textit{Furstenberg set with parameter $\beta$} in the plane is a compact set $K\subset\mathbb{R}^2$ such that for every direction $w\in \mathbb{S}^1$, there exists a line segment, $l_w$, pointed in direction $w$, so that
\begin{equation}\label{e:Haus}
    \text{dim}_{H} (K \cap l_w) \geq \beta
\end{equation}  
for all $w \in \mathbb{S}^1$; here $\text{dim}_{H}$ refers to Hausdorff dimension. More generally, one may consider two-parameter $(\alpha,\beta)$ Furstenberg sets to be subsets $K$ of the plane so that \eqref{e:Haus} holds for $w \in V \subset \mathbb{S}^1$, for some subset $V \subset \mathbb{S}^1$ of Hausdorff dimension $\geq \alpha$. Thus, in analogy to the relationship between the Kakeya-Nikodym maximal operator and Kakeya sets, one would be motivated to introduce a Furstenberg-type maximal operator
\begin{equation}\label{furst}
M_V^{\beta}(f)(x)=\sup_{v\in V} \left| \mu_v\ast f(x) \right|, 
\end{equation}
where $V$ is a subset of $\mathbb{S}^1$ of Hausdorff measure $\alpha$ and $\mu_v$ is the Hausdorff measure of a compact $\beta$-dimensional subset of the real line, embedded into the plane and rotated to be parallel with $v$. The study of Furstenberg sets has a rich history in geometric measure theory, and is deeply connected to the Falconer and sum-product conjectures, as explored, for instance, in \cite{KaTao}.
\newline
\indent
In the discrete setting, we consider the maximal function,
\begin{equation}\label{e:avgsquares}
A^{2}_{V,k}f(x) := \sup_{v \in V} \left| \sum_{n} f(x - n^2v) \phi_k(n) \right|.
\end{equation}
By density considerations, restricting integer sequences to squares may be viewed as the introduction of Radon-type behavior. Here, the counterpart to
\begin{equation}\label{e:linFT}
 \sum_n e^{-2\pi i n (v \cdot \beta) } \cdot \phi_k(n) ``=" \mathbf{1}_{\| v \cdot \beta\| \leq 2^{-k}  },
\end{equation}
where $\|x\|$ denotes the distance from $x$ to the nearest integer, is 
\begin{equation}\label{e:inc1}
 \sum_{n} e^{-2\pi i n^2 (v \cdot \beta) } \cdot \phi_k(n) ``=" \sum_{q\leq C_\epsilon \cdot 2^{\epsilon k}, \ (a,q) = 1} \mathbf{1}_{\|v \cdot \beta - a/q\| \leq 2^{-2k}  },  
\end{equation}
(see \S \ref{ss:app} below for a precise statement); in particular, multi-frequency considerations are paramount.

Nevertheless, provided our sets of directions are given by $V=V_{N,\epsilon}$ as in the statement of Theorem \ref{linthm}, we are again able to bound this maximal function with weak dependence on $|V|$.
\begin{theorem}\label{t:2avg}
Let $\epsilon >0$ and $N>0$. There exists absolute constants $C_\epsilon, C_0=C_0(\epsilon)>0$, so that the following holds:

There exists a set of vectors $V = V_{N,\epsilon}$ (the same collection as in Theorem \ref{linthm}) with $|V|=N$ and $V\subset\{x\in\mathbb{Z}^2:\,|x|\approx A\}$ for any $A\ge N^{C_0}$ so that, if $k \geq C_{2} \log N$ for $C_2=C_2(A)$ sufficiently large, then
\begin{equation}\label{e:maxpbound}
\| A^{2}_{V,k} \|_{\ell^2(\mathbb{Z}^2) \to \ell^2(\mathbb{Z}^2) } \leq C_\epsilon \cdot N^{\epsilon}.
\end{equation}
\end{theorem}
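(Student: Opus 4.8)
The plan is to reduce Theorem~\ref{t:2avg} to the linear estimate of Theorem~\ref{linthm} by exploiting the circle-method heuristic \eqref{e:inc1}, so that the only genuinely new ingredient is a multi-frequency Rademacher--Menshov-type argument. First I would pass to Fourier space on $\mathbb{Z}^2$ and write the Fourier multiplier of $\sum_n f(x-n^2 v)\phi_k(n)$, for fixed $v$, as an exponential sum $S_k(v\cdot\beta)$ in the single real variable $\theta = v\cdot\beta \in \mathbb{T}$. A (by now standard) Weyl/circle-method analysis, which the paper promises to carry out precisely in \S\ref{ss:app}, gives an approximation
\[
S_k(\theta) \approx \sum_{q \le Q} \sum_{(a,q)=1} \frac{g(q)}{q}\, \Psi_k\!\left(\theta - \tfrac aq\right),
\]
where $Q \approx C_\epsilon 2^{\epsilon k}$, $g$ is a Gauss-sum factor with $|g(q)/q| \lesssim q^{-1/2+\epsilon}$, and $\Psi_k$ is a smooth cutoff at scale $2^{-2k}$ — i.e.\ essentially the multiplier $\widehat{\phi_k}$ from the \emph{linear} problem recentered at each rational $a/q$. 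The point is that after this decomposition, the square maximal operator splits, up to an acceptable error, as a sum over $q$ of $q^{-1/2+\epsilon}$ times a rational-shift-modulated copy of the linear directional maximal operator $A_{V,k}$ (more precisely $A_{V,k'}$ at the shifted scale $k'=2k$), to which Theorem~\ref{linthm} applies.

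Next I would handle the modulation by rationals $a/q$. For each fixed denominator $q$, the frequencies $\{a/q : (a,q)=1\}$ are $q^{-1}$-separated, so the associated pieces of the multiplier have essentially disjoint frequency supports (because the cutoffs live at the much finer scale $2^{-2k} \ll q^{-1}$, using $2^k \ge N^{C_1}$ with $C_1$ large). This disjointness lets me square-sum over $a$: by almost-orthogonality (Cotlar--Stein or simply the finite-overlap of the supports), the contribution of a single $q$ is controlled by
\[
\frac{q^{\epsilon}}{q^{1/2}} \cdot \big\| \sup_{v\in V}\, |\,\text{(linear piece at scale }2k)\,| \big\|_{\ell^2 \to \ell^2} \lesssim \frac{q^{\epsilon}}{q^{1/2}}\, C_\epsilon N^{\epsilon},
\]
where the last inequality is exactly \eqref{e:singscaleest} applied at the scale $2^{2k} \ge N^{2C_1}$ (note the lower bound on $k$ in the hypothesis is precisely what feeds the lower bound on the scale required by Theorem~\ref{linthm}). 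Finally I sum over the dyadic ranges $q \sim 2^j$, $0 \le 2^j \le Q$: the factor $q^{-1/2+\epsilon}$ makes $\sum_q q^{-1/2+\epsilon}\cdot(\#\{q\sim 2^j\})^{1/2}$ converge geometrically, so the total is $\lesssim_\epsilon N^\epsilon$ after relabeling $\epsilon$. (Since all of this is at a single scale $k$, no additional long-variation sum over $k$ is needed — the supremum over $v$ is the only supremum, and it has already been absorbed into the operator norm of $A_{V,k}$.)

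The main obstacle is making the almost-orthogonality across different denominators $q$ rigorous while retaining the maximal (i.e.\ $\sup_{v\in V}$) structure: the pieces indexed by distinct $q$ have overlapping frequency supports (the cutoffs around $a/q$ and $a'/q'$ can be close), so one cannot simply Plancherel them apart. The standard remedy is the Ionescu--Wainger multi-frequency projection machinery, which builds a bounded family of projections adapted to the union of all major arcs $\{a/q : q \le Q\}$ with only a $2^{\epsilon k}$-type loss; invoking it here lets me treat the full major-arc contribution as a bounded operator composed with $A_{V,k}$-type pieces. A secondary technical point is the minor-arc / error term: one must verify that the tail of the exponential sum where $\theta$ is not close to any rational of small denominator contributes $O(N^{-100})$, say, in operator norm — this follows from a Weyl-sum bound together with the fact that the relevant frequencies $v\cdot\beta$ range over a set determined by the $N$ vectors $v$, and should be routine given the hypothesis $k \ge C_2 \log N$ with $C_2=C_2(A)$ large. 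I expect the bookkeeping of constants $C_\epsilon, C_0, C_2$ — tracking how the ``$\epsilon$'' in the Ionescu--Wainger loss, the ``$\epsilon$'' in $Q = 2^{\epsilon k}$, and the ``$\epsilon$'' in Theorem~\ref{linthm} combine — to be the most delicate part of writing up the details cleanly.
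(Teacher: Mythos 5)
There is a genuine gap, and it sits exactly where the paper's main new input lives. Your plan is to reduce the per-denominator piece to Theorem~\ref{linthm}: you describe the contribution of a fixed $q$ as ``a rational-shift-modulated copy of the linear directional maximal operator $A_{V,k'}$''. This is not the case. The multiplier for a fixed $q$ is supported on a union of combs with spacing $\approx (qA)^{-1}$ --- tubes centred at the reduced rationals $a/q$ --- whereas the multiplier of $A_{V,k}$ has combs at the integers with spacing $\approx A^{-1}$. These are genuinely different incidence geometries; a modulation shifts the whole comb, it does not refine the spacing. So the $\ell^2$ bound of Theorem~\ref{linthm} does not transfer to the fixed-$q$ piece. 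The ingredient you actually need is the multi-frequency incidence estimate, Proposition~\ref{t:C(s,N)fornow1}, which bounds
\[
\sup_{|\beta|\gtrsim A^{-2}}\ \sum_{2^{s-1}\le r<2^s}\ \sum_{v\in V}\ \mathbf{1}_{K^A_{r,s,v}}(\beta)\ \lesssim_\epsilon N^{\epsilon},
\]
uniformly over the dyadic range $r\sim 2^s$. Theorem~\ref{linthm} is a \emph{consequence} of this estimate (it is the $s$ with $2^s=N$ case), not the other way around; your reduction runs in the wrong direction and loses the genuinely multi-frequency content.

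Your sum over $q$ also does not close as stated. Within the dyadic range $q\sim 2^j$ the Gauss-sum decay gives $q^{-1/2+\epsilon}\approx 2^{-j(1/2-\epsilon)}$, but there are $\approx 2^j$ values of $q$, and the reduced fractions with different $q\sim 2^j$ interlace densely, so you cannot Plancherel them apart. Triangle inequality over $q$ yields $2^j\cdot 2^{-j/2}=2^{j/2}$ per dyadic block, and a hypothetical square-root gain over $q$ would only bring this down to $O(1)$ per block --- still divergent over the $\approx\epsilon k$ blocks, which is unacceptable once $k\gg \log N$. What makes the paper's argument work is that \emph{all} $q\sim 2^s$ are handled simultaneously by the incidence estimate: for fixed $v$ the sets $(K^A_{q,s,v})'$ over $q\sim 2^s$ are disjoint, so the double sum $\sum_v\sum_q\mathbf{1}_{(K^A_{q,s,v})'}$ is what needs to be bounded by $N^\epsilon$, and then the factor $2^{-s/2}$ is free to give geometric decay over $s$. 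Your proposed structure (square-sum over $a$ for fixed $v$, then sum over $q$) also runs into the non-commutation of $\sup_v$ with the inner square sum; the paper instead introduces the frequency projections $\widehat{f_q}$ and square-sums over $v$, which is the maneuver that is actually justified. Finally, the Ionescu--Wainger machinery you invoke does not appear in the paper and is unnecessary: once the incidence estimate is in hand, the argument is direct, and IW is not what rescues the cross-$q$ issue here.
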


In fact, one may replace the monomial sequence $\{ n^2 : n \geq 1\}$ with the image of any polynomial with integer coefficients:
with
\[ A^{P}_{V,k} f(x):= \sup_{v \in V} \left| \sum_{n} f(x - P(n)v ) \phi_k(n) \right|\]
we have the following theorem.

\begin{theorem}\label{t:avg}
Let $\epsilon >0$ and $N>0$. There exists absolute constants $C_\epsilon, C_d=C_d(\epsilon)>0$ so that the following holds:

For any $N$, there exists a set of vectors $V = V_{N,\epsilon}$ (the same collection as in Theorem \ref{linthm}) satisfying $|V|=N$ and $V\subset\{x\in\mathbb{Z}^2:\,|x|\approx A\}$ for any $A\ge N^{C_d}$, so that, if $k \geq C_d' \log N$ for some $C_d'=C_d'(A)$ sufficiently large and $P$ is any polynomial of degree $d$, then
\[ 
\| A^{P}_{V,k} f \|_{\ell^2(\mathbb{Z}^2) \to \ell^2(\mathbb{Z}^2)} 
\leq C_\epsilon \cdot N^{\epsilon}.
 \]
\end{theorem}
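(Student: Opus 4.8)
The plan is to reduce Theorem \ref{t:avg} to the already-established Theorem \ref{t:2avg} by showing that the only feature of the monomial $n^2$ we exploited was the structure of its exponential sum in \eqref{e:inc1}, i.e.\ a Weyl-sum/circle-method decomposition into major arcs centered at rationals $a/q$ with $q\le C_\epsilon 2^{\epsilon k}$. Concretely, I would first record the appropriate multi-dimensional Weyl inequality: for a polynomial $P$ of degree $d$ with integer coefficients,
\[
\Bigl| \sum_n e^{-2\pi i P(n) \theta}\, \phi_k(n) \Bigr|
\]
is, up to an error term that is $O_A(2^{-\delta k})$ for some $\delta=\delta(d)>0$ (hence harmless once $k\ge C_d'\log N$), supported on the union of major arcs $\|\theta - a/q\|\le 2^{-dk+\epsilon k}$ with $q\le C_\epsilon 2^{\epsilon k}$, $(a,q)=1$. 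This is a standard consequence of the Weyl differencing / Vinogradov--Hua machinery; I would cite \cite{MSW} or \cite{MST1} for the precise statement rather than reprove it.

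Second, I would apply this with $\theta = v\cdot\beta$, exactly as in \eqref{e:inc1}, to see that the multiplier of $A^P_{V,k}$ is, modulo an acceptable error, governed by indicator functions of the events $\|v\cdot\beta - a/q\|\le 2^{-dk+\epsilon k}$ for $q\le C_\epsilon 2^{\epsilon k}$. The key observation is that the proof of Theorem \ref{t:2avg}, which handles exactly the degree-$2$ case, never used that the exponent was $2$ in any essential quantitative way beyond: (i) the denominators $q$ are confined to a polynomially-short (in $2^k$) range $q\le C_\epsilon 2^{\epsilon k}$; (ii) the arc width $2^{-dk+\epsilon k}$ is narrower than any fixed negative power of $2^k$ once $k$ is large; and (iii) the arithmetic input about $V=V_{N,\epsilon}$, namely that the vectors are ``spread out'' with respect to common denominators $q$ in this range. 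All three persist verbatim for general $d$, with the only cost being that the thresholds $C_d, C_d'$ now depend on $d$ (the larger the degree, the more major arcs and the wider one must take $k$ relative to $\log N$). So I would run the identical major-arc decomposition, Ionescu--Wainger-type multiplier projections, and the geometric/number-theoretic counting over $V$ that appeared in the proof of Theorem \ref{t:2avg}, now with $d$ in place of $2$ throughout, and absorb all resulting $d$-dependent constants into $C_\epsilon$ as permitted by the statement.

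Third, one must deal honestly with the minor-arc error: on the complement of the major arcs the multiplier is $O_A(2^{-\delta k})$ pointwise, but we are taking a supremum over $v\in V$, so we pick up at most a factor $|V|^{1/2}=N^{1/2}$ from summing $\ell^2\to\ell^2$ bounds over the $N$ directions (or, more efficiently, a $\log^{1/2}N$ from a Rademacher/square-function argument as in the linear theory). Either way, since $k\ge C_d'\log N$, the factor $2^{-\delta k}\le N^{-\delta C_d'}$ beats $N^{1/2}$ with room to spare once $C_d'$ is large enough depending on $\delta(d)$, so the minor-arc contribution is $\le C_\epsilon N^{-1}$, say, and is negligible. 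This is the step requiring a little care, since one is trading a genuine power saving in $2^k$ against a polynomial loss in $N$, and it is exactly here that the hypothesis $k\ge C_d'\log N$ with $C_d'$ depending on $d$ (through the Weyl-savings exponent $\delta(d)$, which degrades as $d$ grows) is used.

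The main obstacle I anticipate is not any single hard estimate but rather verifying that the proof of Theorem \ref{t:2avg} was genuinely written in a degree-agnostic way — that no lemma silently used $d=2$, e.g.\ in a place where an exponential sum was evaluated exactly (Gauss sums) rather than merely bounded, or in a dyadic-pigeonholing step whose number of cases was tied to the exponent. If such a dependence surfaces, the fix should still be routine: Gauss-sum cancellation has a general-$d$ analogue (complete exponential sums $\sum_{n\bmod q} e(P(n) a/q)$ obey the same $q^{1-1/d}$-type bounds), and any case-count tied to $d$ only inflates the constant. So modulo a careful re-reading of the degree-$2$ argument with an eye toward isolating where ``$2$'' appears, the generalization is essentially bookkeeping, and the theorem follows with $C_d, C_d'$ chosen large in terms of $d$ and the implied constants in the Weyl-type inequality for degree-$d$ polynomials.
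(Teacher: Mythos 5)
Your proposal is correct and follows essentially the same route as the paper: the paper proves Theorem \ref{t:2avg} in detail and then deduces Theorem \ref{t:avg} by exactly the substitutions you identify --- replacing the Gauss-sum bound $|S(a/q)|\lesssim q^{-1/2}$ with Hua's $|S(a/q)|\lesssim q^{\epsilon-1/d}$ (so the gain $2^{-s/2}$ becomes $2^{-s/(2d)}$), using Weyl's Lemma \ref{WEYLL} (already stated in the general-degree form) for the minor arcs, and absorbing the weakened savings into $C_d$, $C_d'$. The incidence estimate of Proposition \ref{t:C(s,N)fornow1} and the set $V_{N,\epsilon}$ are degree-independent, so the ``silent $d=2$'' dependence you anticipate as the main risk does not in fact arise.
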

%As a corollary, we obtain the following incidence theorem.

%\begin{cor}
%For any $\epsilon >0$, there exists an absolute constant $C_\epsilon$ so that the following holds:
%For any $0 < \beta < 1$, $k \geq C_{d} \log N$ as in Theorem \ref{t:avg} above, and $L \subset Q_k := [-2^{k},2^{k}]^2$ with $|L| = 2^{2\beta k}$, for all $x \in Q_k$ away from a subset of relative density $\leq N^{-\epsilon}$,
%\[ \sup_{v \in V} \frac{ |\{ x-  v \cdot P(n) : 1 \leq n \leq 2^k\} \cap L|}{2^k} \leq C_\epsilon \cdot N^{\epsilon} \cdot 2^{k \cdot (\beta - 1)} \]
%for each $v \in V$.
%\end{cor}

The key issue is the distribution of the sequence
\[ \{ P(n) \mod q: n \} \]
for each integer $q \geq 1$; by a classical result of Hua, this uniform distribution can be quite precisely quantified in terms of decay of various exponential sums, cf.\ \cite[\S 7, Theorem 10.1]{Hua}, and we are able to conclude our result. With this heuristic in mind, that we are able to conclude analogous results for (polynomial images, or even ``thin'' images \cite{M}, of) the primes, or for randomly generated sequences of prescribed density of the type introduced by Bourgain in \cite[\S 8]{B1},
should be somewhat expected.

Finally, we remark that in the foregoing discussion, all results are remain valid upon replacing our bump functions, $\phi$, with appropriately dilated (one-dimensional) Calder\'{o}n-Zygmund kernels, $K$, see \cite{D} for the continuous theory. As motivation for this perspective, we note that the discrete analogue of Carleson's theorem ($d=1$), and the main result of \cite{BK} $(d\geq 2)$ exhibit the $\ell^2$ boundedness of
\[ \sum_m \frac{ f(x-m,y-v(x) \cdot m^d) }{m}\]
for any $v : \mathbb{Z} \to \mathbb{Z}$; this can be seen by taking a partial Fourier transform in the $y$ variable. The content of our current theorem is the following.

\begin{theorem}\label{t:sing}
For any $\epsilon > 0$ and any polynomial of degree $d$, there exists an absolute constant $C_{\epsilon,d}$ so that one may find $v = (v_1,v_2) : \mathbb{Z}^2 \to V$ with $|V| = N$ arbitrary large, so that
\[ \left\| \sum_{|m| \geq N^{C_{\epsilon,d}}} \frac{ f(x-v_1(x,y)\cdot m,y-v_2(x,y) \cdot P(m)) }{m} \right\|_{\ell^2(\mathbb{Z}^2)} \leq C_{\epsilon,d} \cdot N^{\epsilon} \cdot \|f\|_{\ell^2(\mathbb{Z}^2)}.\]
\end{theorem}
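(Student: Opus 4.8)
\emph{Reduction to a pointwise multiplier estimate.} The plan is to trade the supremum over the measurable direction field for an $\ell^2$ square function over all of $V$, which turns the operator into a sum of honest convolutions, and then to reduce everything to a single frequency-uniform estimate governed by the arithmetic already behind Theorems~\ref{linthm} and~\ref{t:avg}. For fixed $v=(v_1,v_2)\in V$, set $T_v f(x,y):=\sum_{|m|\ge N^{C_{\epsilon,d}}}\frac{f(x-v_1m,\,y-v_2P(m))}{m}$; this is convolution on $\mathbb{Z}^2$ with a kernel carried by the curve $\{(v_1m,v_2P(m))\}$, with $\TT^2$-multiplier $\widehat{K_v}(\xi,\eta)=\sum_{|m|\ge N^{C_{\epsilon,d}}}\frac{e^{-2\pi i(v_1\xi m+v_2\eta P(m))}}{m}$. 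Since, for any direction field $(v_1,v_2):\mathbb{Z}^2\to V$, the operator in the theorem is dominated pointwise by $\big(\sum_{v\in V}|T_v f|^2\big)^{1/2}$, Plancherel reduces matters to the bound
\begin{equation}\label{e:keysq}
\sum_{v\in V}\big|\widehat{K_v}(\xi,\eta)\big|^2\ \le\ C_{\epsilon,d}^2\,N^{2\epsilon}\qquad\text{for a.e. } (\xi,\eta)\in\TT^2 .
\end{equation}
The trivial bound here is $O(N)$; the gain to $N^{2\epsilon}$ must exploit both the truncation $|m|\ge N^{C_{\epsilon,d}}$ and the arithmetic of $V=V_{N,\epsilon}$, and is indeed false for general $V$.

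\emph{Major/minor arc decomposition of each multiplier.} Write $\widehat{K_v}=\sum_{2^k\ge N^{C_{\epsilon,d}}}\mathfrak{m}^{v}_{k}$, where $\mathfrak{m}^{v}_{k}(\xi,\eta)=\sum_m\psi_k(m)e^{-2\pi i(v_1\xi m+v_2\eta P(m))}$ for an odd bump $\psi$ with $\sum_k\psi_k(t)=1/t$ for $t\neq0$; oddness forces $\sum_m\psi_k(m)=0$, the discrete mean-zero property we will use. By Weyl differencing together with Hua's estimates for exponential sums along $P$ (\cite[\S 7]{Hua}; compare \eqref{e:inc1} and \cite{MSW}), there is a $\delta=\delta(d)>0$ so that $\mathfrak{m}^{v}_{k}=O(2^{-\delta k})$ unless the coefficients of the phase polynomial $m\mapsto v_1\xi m+v_2\eta P(m)$ all lie in the anisotropic box of dimensions $(2^{-k},2^{-2k},\dots,2^{-dk})$ about a rational vector of common denominator $q\lesssim 2^{\epsilon k}$; on such a resonant box one gains further $|\mathfrak{m}^{v}_{k}|\lesssim q^{-\delta}$ from the attached complete sums, and $\mathfrak{m}^{v}_{k}$ factors as that weight times a smooth multiplier inheriting the vanishing mean of $\psi$. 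It is precisely this mean-zero feature that lets the resonant pieces be summed over the unboundedly many scales $2^k\ge N^{C_{\epsilon,d}}$ to a bounded (anisotropic Hilbert-transform-type) multiplier rather than a logarithmically divergent one --- this is the passage from the averaging operators of Theorem~\ref{t:avg} to Calder\'on--Zygmund kernels. Summing up, for fixed $(\xi,\eta)$ the left side of \eqref{e:keysq} is dominated, up to a negligible error (of size $O(N^{1-c})$, absorbed by taking $C_{\epsilon,d}$ large), by $\sum_{q\ge 1}q^{-2\delta}\,\#\{v\in V:\ v\ \text{is}\ q\text{-resonant with}\ (\xi,\eta)\}$.

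\emph{The arithmetic count.} It remains to bound this sum by $N^{2\epsilon}$, uniformly in $(\xi,\eta)$, for $V_{N,\epsilon}$. When $d=1$ the resonance condition is just that $\|v\cdot(\xi,p_1\eta)-a/q\|$ be small --- i.e.\ it records overlaps of the strip multipliers in \eqref{e:linFT} --- so this is exactly the ``arithmetic Kakeya'' estimate that powers Theorem~\ref{linthm}; for $d\ge 2$ it is the higher-degree refinement underlying Theorem~\ref{t:avg}, but now required at every modulus $q$ at once, with the two coordinates of $v$ coupled respectively to $\xi$ and to $\eta$. Granting the relevant modulus-$q$ bound for $V_{N,\epsilon}$ --- the construction being arranged so that $V$ meets each resonance set in $\lesssim N^{\epsilon'}$ points with $\epsilon'\to 0$ as $\epsilon\to0$, and so that nontrivial resonances occur only for $q$ below a fixed small power of $N$, forced by the sparsity of $V$ in the annulus of radius $\approx N^{C_0}$ --- the $q$-sum is controlled by $N^{2\epsilon}$, which proves \eqref{e:keysq} and hence the theorem, the constants being uniform over normalised bumps and over odd Calder\'on--Zygmund kernels as in the earlier results.

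\emph{Main obstacle.} The crux is this arithmetic count, together with making the major/minor arc analysis uniform across all scales $2^k\ge N^{C_{\epsilon,d}}$ simultaneously. This is what makes the statement strictly harder than the motivating observation that precedes it: because \emph{both} coordinates of the direction now vary (and the direction may depend on the full point $(x,y)$), one cannot take a partial Fourier transform in $y$ and quote the one-dimensional polynomial Carleson theorem of \cite{BK} --- the resonance structure becomes genuinely two-dimensional, coupling $v_1$ with $\xi$ and $v_2$ with $\eta$. The content of the theorem is that the bespoke family $V_{N,\epsilon}$, built to tame the single-scale arithmetic Kakeya problem of Theorem~\ref{linthm}, remains effective at every modulus and in this coupled, oscillatory setting, so that the naive $O(N)$ bound in \eqref{e:keysq} can be pushed all the way down to $N^{2\epsilon}$.
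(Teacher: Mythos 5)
The paper does not in fact write out a separate argument for Theorem~\ref{t:sing}: it is introduced as a remark (``all results remain valid upon replacing our bump functions $\phi$ with appropriately dilated one-dimensional Calder\'on--Zygmund kernels''), with the intent that one re-runs the machinery of \S\ref{s:easyproof}--\S\ref{s:squares} verbatim, scale by scale, and then sums the dyadic pieces using the cancellation of the odd kernel. Your blind proposal is morally compatible with this, and you have correctly identified the essential ingredients --- the square-function majorization, the Bourgain/Magyar--Stein--Wainger major/minor arc dichotomy, Hua's bounds for the complete sums, the mean-zero property of the kernel across scales, and the incidence estimate (Proposition~\ref{t:C(s,N)fornow1}) for $V_{N,\epsilon}$. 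You have also correctly diagnosed why the partial Fourier transform trick that works when only $v_2$ varies breaks down when both coordinates of the direction field depend on the point.

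There are nonetheless two genuine gaps. First, and most importantly, you defer the final arithmetic count with the phrase ``Granting the relevant modulus-$q$ bound for $V_{N,\epsilon}$''; but this count \emph{is} the content of the theorem. Proposition~\ref{t:C(s,N)fornow1} gives a single-scale bound on the quantity $C(s,V)$ only for $N^\epsilon \le 2^s \le N^{1/\epsilon}$, at a threshold $2^{-C_1 s}$ built into its statement, and one must verify --- not merely assert --- that this estimate can be applied uniformly over the unbounded family of scales $2^k \ge N^{C_{\epsilon,d}}$ and over all moduli $q$ simultaneously, with the contributions summing in $s$ and $k$. Your claim that ``nontrivial resonances occur only for $q$ below a fixed small power of $N$'' is not correct as stated: resonances at large $q$ are always possible, and what saves the day is the geometric decay $q^{-2\delta}$ multiplied against the per-modulus count $C(s,V)\lesssim N^\epsilon$ coming from Proposition~\ref{t:C(s,N)fornow1}; for $d\ge 3$ (where $2\delta$ may be close to $2/d<1$) the convergence of $\sum_q q^{-2\delta}$ fails and one must use that for each fixed $v$ and $(\xi,\eta)$ only one modulus is the ``best,'' so the count is really a single-layer sum over $v$ rather than a free double sum over $(q,v)$.

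Second, your decision to use the pointwise square function $\big(\sum_{v\in V}|T_v f|^2\big)^{1/2}$ across \emph{all} moduli $s$ departs from the paper's template, and its validity is not self-evident. In the proof of Theorem~\ref{linthm} (and again for $s\ll\log N$ in Proposition~\ref{p:2est}), the paper explicitly avoids the square function for the low-frequency/small-modulus piece, instead invoking the MSW transference lemma (Lemma~\ref{trans}) to import the continuous directional theory, precisely because the square function over $N$ directions costs a factor of $N^{1/2}$ in that regime. For the Calder\'on--Zygmund kernel, the odd truncation does produce extra cancellation near $v\cdot\beta\equiv 0$, and the truncation $|m|\ge N^{C_{\epsilon,d}}$ does squeeze the ``essential support'' of the multiplier onto a set of tubes controlled by the incidence estimate, so the square-function route is not obviously wrong --- but these points need to be spelled out (the multiplier $\widehat{K_v}$ is $O(1)$ in a full dyadic band of thicknesses $\|v\cdot\beta\|\approx N^{-C}$ near each rational, and one must show the incidence count is uniform over all the dyadic thresholds at and above that band, using the monotonicity of the sets $\{v:\|v\cdot\beta\|\le\delta\}$ in $\delta$ together with a large choice of $C_{\epsilon,d}$). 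As written, the proposal gestures at the cancellation but does not actually carry out the summation over scales, which is where the passage from the averaging Theorems~\ref{linthm}, \ref{t:avg} to the singular Theorem~\ref{t:sing} lives.
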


\bigskip

The structure of the paper is as follows:

In \S \ref{s:inc} we prove our multi-frequency incidence estimates; 

In \S \ref{s:easyproof}, we apply these incidence estimates to quickly prove Theorem \ref{linthm} (and Corollary \ref{cor:multiscalemax});

In \S \ref{s:squares} we combine the above analysis with further number-theoretic techniques from discrete harmonic analysis to prove Theorem \ref{t:2avg} (and Theorem \ref{t:avg}).

%\S \ref{s:appendix} contains a brief $TT^*$ argument used in establishing Corollary \ref{cor:multiscalemax}.

\subsection{Acknowledgements}
The authors would like to thank Francesco di Plinio for sparking their interest in directional maximal functions. They would also like to thank Michael Lacey for his continued support and help in the editing process, and to Polona Durcik and Jos\'{e} Madrid for help in the editing process and many useful comments that significantly improved the clarity of the final draft.

\subsection{Notation}
Here and throughout, $e(t) := e^{2\pi i t}$; $x \equiv y$ will denote equivalence $\mod 1$. Throughout, $C$ will be a large number which may change from line to line. 

For finitely supported functions on $\mathbb{Z}^d, \ d=1,2$, we define the Fourier transform
\[ \F_{\mathbb{Z}} f(\beta) := \hat{f}(\beta) := \sum_n f(n) \, e(-\beta n),\]
with inverse
\[ \F_{\mathbb{Z}}^{-1}g(n) := g^{\vee}(n) := \int g(\beta) \, e(\beta n) \ d\beta,\]
where integration occurs over $\mathbb{T}$ or $\mathbb{T}^2$ depending on dimension.

We will let $\chi$ be an even non-negative compactly supported Schwartz functions which approximates the indicator function of an interval centered at the origin.
\begin{equation}\label{chi}
\mathbf{1}_{|\xi| \leq c} \leq \chi \leq \mathbf{1}_{|\xi| \leq 2c}.
\end{equation}

We will use
\[ \| f \|_{\ell^p} := \left( \sum_{x \in \mathbb{Z}^2} |f(x)|^p \right)^{1/p} \]
with the obvious modification at $p=\infty$. When we sum over $\mathbb{Z}$ we will specify the domain explicitly.

We will make use of the modified Vinogradov notation. We use $X \lesssim Y$, or $Y \gtrsim X$, to denote the estimate $X \leq CY$ for an absolute constant $C$. We use $X \approx Y$ as shorthand for $Y \lesssim X \lesssim Y$. We also make use of big-O notation: we let $O(Y )$ denote a quantity that is $\lesssim Y$. If we need $C$ to depend on a parameter, we shall indicate this by subscripts, thus for instance $X \lesssim_p Y$ denotes the estimate $X \leq C_p Y$
for some $C_p$ depending on $p$. We analogously define $O_p(Y)$.

\section{A Multi-Frequency Incidence Problem}\label{s:inc}
Before turning to the main result of this section, we briefly pause to explain the nature of our multi-frequency incidence problem.

The departure point is that an upper bound for the ``arithmetic" directional maximal function
\begin{equation}\label{e:opNorm}
\| A_{V,k} \|_{\ell^2(\mathbb{Z}^2) \to \ell^2(\mathbb{Z}^2)}
\end{equation}
 is -- up to standard transference arguments -- essentially given by
\begin{equation}\label{e:inc0}
\sup_{ |\beta| \gtrsim A^{-2}} \left( \sum_{v \in V} \sum_{m \in \mathbb{Z}} \mathbf{1}_{|v \cdot \beta - m | \ll 2^{-k}}(\beta) \right)^{1/2};
\end{equation}
here $V \subset \{ |x| \approx A, \ x \in \mathbb{Z}^2 \}$, and $2^k \geq N^{C_1}$ is sufficiently large.

Note that the quantity \eqref{e:inc0} is the $L^{\infty}$ norm outside of a ball of radius $A^{-2}$ of a sum of characteristic functions of tubes of thickness $\ll 2^{-k}A^{-1}$ that are perpendicular to $v$ for some $v\in V$, where the tubes pointing in a given direction are spaced apart in an arithmetic progression with spacing $\approx A^{-1}$. (Note that it is necessary to remove a ball at the origin since for \textit{any} collection of vectors $V$, the quantity inside the supremum in \eqref{e:inc0} is $\approx N^{1/2}$ near the origin.) Interestingly, a heuristic that emerges both in this linear setting, and then again later in the polynomial setting, where multi-frequency complications arise, is that those collections of vectors that are amenable to obtaining $\ell^2(\Z^2)$ bounds are those whose (normalized) coordinates exhibit an \emph{intermediate} amount of arithmetic independence in terms of prime factorization. With these remarks in mind, we turn to our incidence problem. 

\bigskip

Recall that we have fixed $\epsilon, N>0$. For a given collection of vectors $V$ and for each $v\in V$, consider the union of tubes

\begin{equation}
K_{k, v}^A:=\{\beta\in\mathbb{T}^2, |\beta|\gtrsim A^{-2}:\,|v\cdot\beta-m|\lesssim 2^{-k}A^{-1}\text{ for some }m\in\mathbb{Z}\}.
\end{equation}
To bound \eqref{e:inc0}, we would thus like to estimate
\begin{equation}\label{in}
\bigg\|\sum_{v\in V}{\bf 1}_{K_{k, v}^A}\bigg\|_{L^{\infty}(\mathbb{T}^2\setminus B_{A^{-2}}(0))}.
\end{equation}
However, we instead choose to estimate the following more general quantity, which we will see arises in our study of the directional maximal function along the squares (and more generally polynomial orbits) in a manner analogous to \eqref{e:inc0}, and which in fact dominates (\ref{in}) for an appropriate choice of parameters. Fix an integer $s\ge 1$ with $N^{\epsilon}<2^s<N^{1/\epsilon}$ and $C_0>0$ an absolute constant assumed to be sufficiently large. For $2^{s-1} \leq r < 2^s$, and $v \in V$, consider the union of tubes,
\begin{equation}\label{e:Kr}
K_{r,s, v}^A := \{ \beta \in \mathbb{T}^2, \ |\beta| \gtrsim A^{-2} : |v \cdot \beta - m - b/r| \lesssim 2^{-C_1 s} \text{ for some } b \leq r, m \in \mathbb{Z} \}.
\end{equation}
Note that for the choice of $s$ such that $2^s=N$ and for $2^k\ge N^{C_1}\ge N^{C_0}$ (which we recall is a hypothesis from Theorem \ref{linthm}) if we take $C_1=C_1(A)\ge C_0$), we have that 
$$K_{k, v}^A\subset K_{r, s, v}^A$$
for any $r$.
We will be interested in estimating
\begin{equation}\label{e:C(s,N)}
C(s,V) := \left\| \sum_{2^{s-1} \leq r < 2^s, \ v \in V} \mathbf{1}_{K_{r,s,v}^A} \right\|_{L^\infty(\mathbb{T}^2\setminus B_{A^{-2}}(0))}.
\end{equation}
If we let $A\mathbb{T}^2$ denote the dilation of the torus by a factor of $A$, then by scaling considerations, with
\[ K^{A, *}_{r, s, v} := \left\{ \beta \in A\mathbb{T}^2 :  |\beta| \gtrsim A^{-1}, \  | v \cdot \beta - \frac{b}{r}| \lesssim 2^{-C_1 s} \text{ for some } b \leq Ar, b \in \Z \right\},\]
it suffices to estimate
\begin{equation}\label{e:C(s,N)1} C(s, V)\approx\left\| \sum_{2^{s-1} \leq r < 2^s, \ v \in A^{-1} V} \mathbf{1}_{K^{A, *}_{r, s, v}} \right\|_{L^\infty(\mathbb{AT}^2\setminus B_{A^{-1}}(0))}.
\end{equation}
So now we may work with vectors $v\in A^{-1}V$ that have modulus $\approx 1$, so long as they can be rescaled so that each $AV$ is a lattice point. If each $v$ has modulus $\approx 1$, we may view the set $K_{r, s, v}^{A, *}$ as a union of parallel, equally spaced tubes in $A\mathbb{T}^2$ with long axes perpendicular to $v$, of thickness $\approx 2^{-C_0s}$ and spacing $\approx 1/r$.
Our main result in this section is the following proposition.
\begin{proposition}[Incidence estimate]\label{t:C(s,N)fornow1}
Let $\epsilon> 0$, $N>0$. Then there is $C_0=C_0(\epsilon)>0$ so that for any $A\ge N^{C_0}$ there exist a collection of vectors $V=V_{N, \epsilon}\subset\{x\in\mathbb{Z}^2:\,\frac{1}{100}A\le |x|\le 100A\}$ with $|V|=N$ so that for each $N^{\epsilon} \leq 2^s \leq N^{\frac{1}{\epsilon}}$, one may take
\begin{equation}\label{e:C(s,N)est1}
C(s,V) \lesssim_\epsilon N^{\epsilon}
\end{equation}
whenever the implicit constant $C_1=C_1(A)$ in the definition of $C(s, V)$ is taken sufficiently large.
\end{proposition}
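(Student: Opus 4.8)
The plan is to reduce the bound on $C(s,V)$ to a pointwise count for a single frequency $\beta$, and then control that count by pitting the $\mathbb{Z}$-linear dependences among triples of vectors of $V$ against a divisibility structure deliberately built into the pairwise determinants $\det(v,v')$; the vectors $V=V_{N,\epsilon}$ are chosen so as to make both inputs work.

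\smallskip
\noindent\emph{Step 1: reduction to a point count.} After the rescaling recorded above one works with the sets $K^{A,*}_{r,s,v}$. For $\beta$ with $|\beta|\gtrsim A^{-2}$, call $v\in V$ \emph{$\beta$-resonant} if $\|v\cdot\beta-b/r\|\lesssim 2^{-C_1 s}$ for some $2^{s-1}\le r<2^s$ and some $b$ with $(b,r)=1$ (so that $b/r$ has denominator exactly $r$). Since two distinct reduced fractions of denominator $<2^s$ differ by at least $2^{-2s}$, once $C_1\ge 3$ each $\beta$-resonant $v$ determines a \emph{unique} pair $(r,b)=(r_v,b_v)$, $r_v\in[2^{s-1},2^s)$; the sum over $r$ in the definition of $C(s,V)$ collapses, and therefore
\[
C(s,V)\ =\ \sup_{|\beta|\gtrsim A^{-2}}\ \#\{\,v\in V:\ v\ \text{is $\beta$-resonant}\,\},
\]
so it suffices to bound this count by $O_\epsilon(N^\epsilon)$, uniformly in $\beta$ and in $N^\epsilon\le 2^s\le N^{1/\epsilon}$.

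\smallskip
\noindent\emph{Step 2: the algebraic core and the choice of $V$.} Any three vectors of $\mathbb{Z}^2$ are linearly dependent: for pairwise non-parallel $u,v,w\in V$ one has a primitive relation $\alpha_u u+\alpha_v v+\alpha_w w=0$ with $\alpha_u:\alpha_v:\alpha_w=\det(v,w):\det(w,u):\det(u,v)$, so $|\alpha_u|,|\alpha_v|,|\alpha_w|\lesssim A^2$. Pairing with $\beta$ gives $\alpha_u(u\cdot\beta)+\alpha_v(v\cdot\beta)+\alpha_w(w\cdot\beta)\equiv 0\pmod 1$; if $u,v,w$ are all $\beta$-resonant, substituting $u\cdot\beta=b_u/r_u+O(2^{-C_1 s})$ etc.\ produces a total error $O(A^2 2^{-C_1 s})$, which is less than $(\lcm(r_u,r_v,r_w))^{-1}\ge 2^{-3s}$ once $C_1=C_1(A)$ is chosen large enough that $A^2 2^{-C_1 s}<2^{-3s}$ (permissible since $2^s\ge N^\epsilon$). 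Hence $\alpha_u b_u/r_u+\alpha_v b_v/r_v+\alpha_w b_w/r_w\in\mathbb{Z}$, and extracting this modulo each prime power dividing $r_u$ (using $(b_u,r_u)=1$) yields $r_u/\gcd(r_u,\det(v,w))\mid\lcm(r_v,r_w)$, and cyclically. We now fix $V=V_{N,\epsilon}\subset\{\tfrac1{100}A\le|x|\le 100A\}$, $|V|=N$, pairwise non-parallel, so that: (i) every prime factor of every $\det(v,v')$ exceeds $N^{1/\epsilon}$ --- then $\gcd(r,\det(v,v'))=1$ for all $r<2^s\le N^{1/\epsilon}$, upgrading the divisibility to $r_u\mid\lcm(r_v,r_w)$; and (ii) every rank-two sublattice of $\mathbb{Z}^2$ of index $\le N^{1/\epsilon}$, and every line, meets $V$ in at most $N^{\epsilon/2}$ points. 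The existence of such a $V$ for $A\ge N^{C_0(\epsilon)}$ is established by choosing $V$ at random among primitive lattice points of the annulus (which gives (ii)) together with a union bound over the $\lesssim N^2$ determinants, using standard sieve lower bounds for the density of integers $\asymp A^2$ all of whose prime factors exceed $N^{1/\epsilon}$; absorbing the sieve loss is exactly what forces $C_0$ to be large.

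\smallskip
\noindent\emph{Step 3: finishing the count.} Fix $\beta$ and let $U$ be the set of $\beta$-resonant vectors; assume $|U|\ge 6$. If two members of $U$ share a denominator $r$, then for any third $u\in U$ property (i) and Step 2 give $r_u\mid\lcm(r,r)=r$, and since $r_u,r\in[2^{s-1},2^s)$ this forces $r_u=r$; thus \emph{every} $\beta$-resonant $v$ has $\|r(v\cdot\beta)\|\le 2^{-(C_1-1)s}$, i.e.\ $v$ lies in the Bohr set $B:=\{x\in\mathbb{Z}^2:\|x\cdot(r\beta)\|\le 2^{-(C_1-1)s}\}$ and, moreover, $\gcd(\lfloor r(v\cdot\beta)\rceil,r)=1$. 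Analysing $B$ by continued fractions of $r\beta$: either $B$ contributes $\lesssim A^2 2^{-(C_1-1)s}\le N^{\epsilon/2}$ points of $V$ (for $C_1$ large depending on $A$); or $B\cap\{|x|\approx A\}$ is contained in a low-index sublattice --- here $|\beta|\gtrsim A^{-2}$ is used to bound that index, and the coprimality constraint $\gcd(\lfloor r(v\cdot\beta)\rceil,r)=1$ is used to pass from ``$v\in B$'' to ``$v$ resonant'' --- and then property (ii) gives $\le N^{\epsilon/2}$ such $v$; either way $|U|\le N^{\epsilon/2}$. If instead all denominators occurring in $U$ are distinct, then $S:=\{r_v:v\in U\}$ satisfies $r_1\mid\lcm(r_2,r_3)$ for all distinct $r_1,r_2,r_3\in S$, which forces every element of $S$ to divide the single integer $L:=\lcm(r,r')$ (the same for every pair $r\ne r'\in S$) with $L<4\cdot 2^{2s}\le 4N^{2/\epsilon}$, so $|U|=|S|\le d(L)\le N^{\epsilon/2}$ once $N$ is large in terms of $\epsilon$, where $d$ denotes the number-of-divisors function. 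In all cases $\#\{\beta\text{-resonant }v\}\lesssim_\epsilon N^\epsilon$.

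\smallskip
\noindent\emph{Expected main obstacle.} The delicate part is the ``single denominator'' regime of Step 3, equivalently the case in which $\beta$ is abnormally close to a rational point of small height: one must (a) fix the scale at which the spreading hypothesis (ii) is imposed so that the Bohr set really does capture few points of $V$, remembering that membership in $B$ is necessary but not sufficient for resonance (the coprimality condition $\gcd(\lfloor r(v\cdot\beta)\rceil,r)=1$ must be exploited, as a point of $V$ lying in a very small-index sublattice typically fails it), (b) use $|\beta|\gtrsim A^{-2}$ to cap the index of the sublattice that can appear, and (c) actually produce, for \emph{every} $A\ge N^{C_0(\epsilon)}$, a set $V$ meeting both (i) and (ii), which is where the geometric (lattice-point) and number-theoretic (sieve and divisor) inputs are most tightly coupled; the interlocking quantifier structure $C_0=C_0(\epsilon)$, $C_1=C_1(A)$ must be threaded carefully through all of the above.
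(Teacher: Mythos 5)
Your strategy is genuinely different from the paper's. The paper does not try to make pairwise determinants arithmetically ``rough''; instead it constructs $V$ so that the (rescaled) coordinates of each $v_i$ are divisible by an explicit $\kappa$-fold product of primes taken from a fixed pool $P_M\subset[N^{M/\epsilon},10N^{M/\epsilon}]$, with no two vectors using the same $\kappa$-subset. For any $S\subset V$ of size $N^\epsilon$ it then extracts, by pigeonhole, $K\approx N^{\epsilon^2/2}$ disjoint \emph{pairs} of vectors that share a common prime in their coordinates. For each such pair the intersection of two combs is shown, after a fixed rescaling, to lie in $p_{l_j}\mathbb Z+O(N^{-C_1/\text{const}})$; multiplying over $K'=C_1/1000$ pairs forces the $x$- and $y$-projections of the total intersection into a tiny ball around the origin, which is disjoint from the combs. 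There is no appeal to a triple linear-dependence relation, no determinant sieve, and no divisor-function endgame.

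The main problem with your proposal is property (i): that every prime factor of every pairwise determinant $\det(v,v')$ exceeds $N^{1/\epsilon}$. This is the linchpin that upgrades $r_u/\gcd(r_u,\det(v,w))\mid\lcm(r_v,r_w)$ to $r_u\mid\lcm(r_v,r_w)$, and I do not see how to build a set $V$ with $|V|=N$ satisfying it. The determinants have size $\lesssim A^2$, and the density of integers of that size all of whose prime factors exceed $N^{1/\epsilon}$ is $\asymp 1/\log(N^{1/\epsilon})=\epsilon/\log N$ by Buchstab/Mertens; a union bound over the $\binom N2$ pairs therefore fails by a factor of roughly $N^2\log N/\epsilon$, so a random choice of $V$ will certainly violate (i). Deterministic fixes do not obviously rescue it: placing $V$ inside a sublattice of prime index $P$ only guarantees $P\mid\det(v,v')$, and the cofactor $\det(v,v')/P$ — whose size is governed by the angular spacing, hence is typically of order $N^2$ or larger — can still carry small prime factors; forcing the cofactor to be $\pm1$ would restrict the angular separations so severely that one cannot fit $N$ directions. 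Even a weakening of (i) (bound the $2^s$-smooth part of $\det(v,v')$ rather than demand roughness) is hard, since the expected smooth part of a random integer at smoothness level $Y=2^s$ is of order $Y$ itself, comparable to the $r$'s you are trying to protect. So (i), as stated or weakened, looks unachievable, and with it the triple-divisibility mechanism collapses.

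There are two secondary issues worth flagging. First, Step 1 asserts that each $\beta$-resonant $v$ determines a unique pair $(r,b)$ with $(b,r)=1$, so that the sum over $r$ collapses; but $C(s,V)$ counts pairs $(r,v)$, and if $v\cdot\beta$ lies within $2^{-C_1 s}$ of a rational $b_0/r_0$ in lowest terms with $r_0<2^{s-1}$, then \emph{every} $r\in[2^{s-1},2^s)$ that is a multiple of $r_0$ contributes a term, i.e.\ one $v$ can contribute up to $\approx 2^{s-1}/r_0$ terms. This multiplicity has to be either bounded separately or shown to occur only for $\beta$ in the excluded ball near the origin. Second, the single-denominator branch of Step 3 (the Bohr-set/continued-fraction analysis) is left at the level of a sketch; the passage from ``$v\in B$'' to ``$v$ resonant'' via the coprimality of $\lfloor r(v\cdot\beta)\rceil$ with $r$, and the use of $|\beta|\gtrsim A^{-2}$ to cap the index of the emergent sublattice, are exactly the places where the proposal is most under-specified — you flag this yourself, but it is not a small matter, because without a complete argument here the spreading hypothesis (ii) cannot be invoked. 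The paper avoids both of these issues by working directly with intersections of combs and never reducing to a per-$\beta$ resonance count.
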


We may relate the quantity $C(s, V)$ to mixed norm estimates for the $X$-ray estimate in the plane. Such estimates are equivalent to certain $L^p$ norm estimates for sums of characteristic functions of tubes, and the endpoint $L^2$ $X$-ray estimate in the plane may be stated as follows.

\begin{proposition}[Endpoint $X$-ray estimate in the plane] 
Let $1 \gg \delta > 0$ be arbitrary, and let $\mathbf{T} := \{T\}$ be a set of $\delta$--tubes in $\RR^2$ so that no tube is contained in the two-fold dilate of any other tube. Suppose furthermore that for each direction $v\in \mathbb{S}^1$, at most $m$ tubes point in a direction that is $\delta$--close to $v$. Then for every $\epsilon>0$,
    \begin{equation}
    \Big\Vert\sum_{T \in\mathbf{T}}\mathbf{1}_T \Big\Vert_{2}\lesssim_\epsilon \delta^{-\epsilon}\cdot m^{1/2} \cdot \left( \delta |\mathbf{T}| \right)^{1/2}.
    \end{equation}
\end{proposition}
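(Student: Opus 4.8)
## Proof Proposal

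The plan is to reduce the endpoint $L^2$ $X$-ray estimate in the plane to a geometric incidence count via an $L^2$ expansion, using the near-disjointness of tubes pointing in close directions. First I would write
\[
\Big\Vert \sum_{T \in \mathbf{T}} \mathbf{1}_T \Big\Vert_2^2 = \sum_{T, T' \in \mathbf{T}} |T \cap T'|,
\]
so that the task becomes estimating the total overlap $\sum_{T,T'} |T \cap T'|$. I would split this sum according to the angle $\theta(T,T')$ between the directions of $T$ and $T'$. The diagonal-ish contribution, where $\theta(T,T') \lesssim \delta$, is handled by the hypothesis: for a fixed $T$, there are at most $\lesssim m$ tubes $T'$ with direction $\delta$-close to that of $T$, and each contributes $|T \cap T'| \lesssim |T| \approx \delta$ (in fact $\approx \delta \cdot 1$, taking unit length tubes), but we must be careful here — two tubes pointing in exactly the same direction could overlap in length comparable to $1$, giving $|T \cap T'| \approx \delta$. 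Summing over the $\lesssim m$ such $T'$ and then over all $|\mathbf{T}|$ choices of $T$ gives a contribution $\lesssim m \cdot \delta \cdot |\mathbf{T}|$, which matches the claimed bound $\delta^{-\epsilon} m (\delta|\mathbf{T}|)$ (indeed it is even better, with no $\delta^{-\epsilon}$ loss).

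The main work is the transverse part, $\theta(T,T') \gtrsim \delta$. Here the elementary planar fact is that two $\delta$-tubes meeting at angle $\theta$ intersect in a set of measure $\lesssim \delta^2/\theta$. I would dyadically decompose: for $\delta \lesssim 2^j \lesssim 1$, let $\mathcal{P}_j$ be the set of pairs $(T,T')$ with $\theta(T,T') \approx 2^j$, so that
\[
\sum_{\theta(T,T') \gtrsim \delta} |T \cap T'| \lesssim \sum_{j : \delta \lesssim 2^j \lesssim 1} \frac{\delta^2}{2^j} \, \#\mathcal{P}_j.
\]
To bound $\#\mathcal{P}_j$ I would fix $T$ and estimate the number of $T'$ making angle $\approx 2^j$ with $T$ and actually meeting $T$. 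The direction hypothesis gives that the number of admissible directions in any arc of length $2^j$ is $\lesssim m \cdot 2^j/\delta$; but not all of those tubes meet $T$ — since the tubes are essentially disjoint transversally, for a fixed direction $w$ at angle $\approx 2^j$ to $T$, the tubes pointing in direction $w$ that intersect the fixed tube $T$ number $\lesssim 1 + \delta^{-1}\cdot\text{(width of }T\text{ in direction }w)\approx 1 + 2^j/\delta \cdot \delta = 1 + 2^j$; combined with the $m \cdot 2^j/\delta$ directions this overcounts, so instead I would argue directly that the tubes $T'$ with $\theta(T,T')\approx 2^j$ meeting $T$ are essentially disjoint inside a $2^j$-neighborhood (in the transverse direction) of $T$, a slab of area $\approx 2^j$, and since no tube is contained in the dilate of another and at most $m$ per direction, one gets $\#\{T' : (T,T')\in\mathcal{P}_j\} \lesssim m \cdot 2^j/\delta \cdot$ (something $O(1)$). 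Plugging $\#\mathcal{P}_j \lesssim |\mathbf{T}| \cdot m \cdot 2^j/\delta$ in yields, for each $j$,
\[
\frac{\delta^2}{2^j}\cdot |\mathbf{T}| \cdot \frac{m \, 2^j}{\delta} = \delta \, m \, |\mathbf{T}|,
\]
independent of $j$; summing over the $O(\log(1/\delta))$ values of $j$ gives $\lesssim \log(1/\delta)\cdot \delta m |\mathbf{T}| \lesssim_\epsilon \delta^{-\epsilon} m (\delta|\mathbf{T}|)$, which is exactly the square of the asserted bound.

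The step I expect to be the main obstacle is the accurate count of $\#\mathcal{P}_j$: one needs to use \emph{both} the per-direction multiplicity bound $m$ \emph{and} the non-containment/essential-disjointness hypothesis simultaneously, and a naive product of "number of directions" times "tubes per direction meeting $T$" overshoots. The clean way is to exploit disjointness: the tubes $T'$ with $\theta(T,T') \approx 2^j$ intersecting $T$ all live in a fixed slab of area $O(2^j)$ around $T$, each such $T'$ occupies area $\approx \delta$ there and they are boundedly overlapping among themselves \emph{within a given direction} (by non-containment) and there are $\le m$ directions, so a packing/Cauchy–Schwarz argument bounds their number. This is precisely the kind of "tubes-in-a-slab" estimate that makes the planar $X$-ray inequality work, and carrying it out carefully — tracking that the $\log(1/\delta)$ loss can be absorbed into $\delta^{-\epsilon}$ and that the endpoint $L^2$ (rather than $L^p$, $p>2$) is genuinely attained — is the crux.
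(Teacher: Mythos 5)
The paper does not actually prove this proposition; it is stated purely as motivational background (to interpret the quantity $C(s,V)$ as an ``arithmetic $X$-ray estimate''), and it is never invoked in any of the paper's arguments, so there is no paper proof to compare against.

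Your proposal is the standard C\'{o}rdoba-type $L^2$ overlap argument and is essentially correct, but you are overcomplicating the decisive counting step. The bound you need, namely
\[
\#\{\, T' : \theta(T,T')\approx 2^j,\ T\cap T'\ne\emptyset \,\}\lesssim m\cdot 2^j/\delta ,
\]
already holds for the larger set of \emph{all} $T'$ at angle $\approx 2^j$ to $T$, whether or not they meet $T$: an arc of directions of length $\approx 2^j$ is covered by $\approx 2^j/\delta$ caps of length $\delta$, each of which by hypothesis carries the directions of at most $m$ tubes. Plugging $\#\mathcal{P}_j\lesssim |\mathbf{T}|\,m\,2^j/\delta$ into your display yields $\delta m|\mathbf{T}|$ per dyadic angular scale, and summing over the $O(\log(1/\delta))$ scales gives $\Vert\sum_T \mathbf{1}_T\Vert_2^2\lesssim \delta m|\mathbf{T}|\log(1/\delta)$, which is exactly the square of the asserted bound after absorbing the logarithm into $\delta^{-\epsilon}$. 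Thus the ``tubes-in-a-slab packing / Cauchy--Schwarz'' refinement you anticipate as the crux is not needed; your worry about a ``naive product overshooting'' stems from conflating the number of $\delta$-direction-caps in a $2^j$-arc (which is $2^j/\delta$) with the number of tubes having direction in that arc (which is $\lesssim m\cdot 2^j/\delta$). In the same spirit, the non-containment hypothesis in the statement plays no role in the $L^2$ estimate itself (the per-direction multiplicity alone handles both the near-parallel contribution $\lesssim m\delta|\mathbf{T}|$ and the transverse contribution); it serves the $X$-ray interpretation rather than the proof.
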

This is similar to a dualized Kakeya maximal estimate, except the $X$-ray estimate involves multiple tubes in the same direction.

An estimate for $C(s, V)$ may thus be viewed as an $L^{\infty}$ arithmetic $X$-ray estimate in prescribed directions, since it involves sums of characteristic functions of tubes lying in a restricted set of directions such that centers of parallel tubes lie in arithmetic progression.

We now turn to the proof.

\subsection{The Proof of Proposition \ref{t:C(s,N)fornow1}}

Recall our set-up: for some fixed $A$, we have
\begin{equation}\label{e:Vloc}
V \subset \{ x \in \mathbb{Z}^2 : |x| \approx A\};
\end{equation}
we will work with $V_A := \frac{1}{A} \cdot V$. It suffices to choose a collection of vectors $V_A=\{v_1, v_2, \ldots, v_N\}$ living in the unit annulus in $\mathbb{R}^2$, so that 
$$V=AV_A\subset\{x\in\mathbb{Z}^2:\,|x|\approx A\}$$ with $|V_A|=N$ so that for any subcollection $S\subset V$ with $|S|=N^{\epsilon}$  and any choice of integers $\{r_{v} : 2^{s-1}\le r_{v}<2^s, v\in S \}$, no point of $\mathbb{R}^2$ is contained in 
\[ \bigcap_{r_{v} : v \in S} K^{A, *}_{r_{v}, s, v}.\]
Here we have defined
\[ K^{A, *}_{r, s, v} := \left\{ \beta \in A\mathbb{T}^2 :  |\beta| \gtrsim A^{-1}, \  | v \cdot \beta - \frac{b}{r}| \lesssim 2^{-C_1 s} \text{ for some } b \leq Ar, b \in \Z \right\},\]
to be a union of parallel tubes in $A\mathbb{T}^2$ with long axes perpendicular to $v$ and with thickness $\approx 2^{-C_1s}$ and spacing $r^{-1}$. Recall that $C_1=C_1(A, \epsilon)$ is a (sufficiently large) constant, and so the thickness of the tubes is very small comparable to their spacing.

\subsubsection*{The Construction}
Choose a large integer $M\gg 1$, to be determined later, and set $\mathbb{P}_N$ to be the set of primes which do not divide $N$, and collect the smallest $N^{\epsilon /2}$ primes in 
\[ \mathbb{P}_N \cap [N^{M/\epsilon}, 10N^{M/\epsilon}]; \]
call this set $P_M$. By Stirling's approximation
%\begin{align*}
%{n\choose k}=(1+o(1))\frac{1}{\sqrt{2\pi k}}\bigg(\frac{ne}{k}\bigg)^k.
%\end{align*}
%Thus 
it is possible to define an integer 
\begin{equation}\label{e:kappa}
\kappa \approx_{\epsilon} \epsilon^{-1}
\end{equation}
so that 
\begin{align*}
{ N^{\epsilon/2} \choose \kappa}\approx N.
\end{align*}
We would like to construct our collection of vectors so that when we rescale by $A$ the coordinates of the vectors are integers. Now if $A$ is sufficiently large, say $A\ge N^{2M\kappa/\epsilon}$, and is moreover an integer multiple of $N^{M\kappa/\epsilon}$ (by adjusting our choice of $\kappa$ we may assume that $N^{M\kappa/\epsilon}$ is an integer), then this is indeed satisfied when we choose our set $V_A=\{v_1, \ldots, v_N\}$ so that 
$$(v_i)_x=m_i\cdot Q_i\cdot N^{-m\kappa/\epsilon}\cdot p_{i_1}p_{i_2}\cdots p_{i_{\kappa}}$$
$$(v_y)_x=n_i\cdot Q_i\cdot N^{-m\kappa/\epsilon}\cdot p_{i_1}p_{i_2}\cdots p_{i_{\kappa}}$$
and $V_A$ satisfies the following constraints.
\begin{itemize}
\item For each $i$, we have $1/4\le n_i/m_i\le 1/2$ with $n_i, m_i>0$,
\item For each $i$, $(m_i, n_i)\in \mathbb{Z}^2\cap\{x:\,|x|\approx 100N^2\}$, 
\item For all $i\ne j$, we have $(m_i, n_i)^{\perp}\cdot (m_j, n_j)\ne 0$.
\item $\{p_{i_j}:\,1\le j\le\kappa\}\subset P_M$ are distinct,
\item No two $v_i$ have the exact same collection of corresponding primes $\{p_{i_1}, p_{i_2}, \ldots, p_{i_{\kappa}}\}$.
\item Each $Q_i$ is a dyadic number with $N^{-2}\cdot 2^{-100\kappa}\le Q_i\le N^{-2}\cdot 2^{100\kappa}$ chosen so that $|v_i|\approx 1$ (note that there are $\lesssim 100\kappa$ many possible choices of $Q_i$). 
\end{itemize}
The first constraint ensures (for simplicity) that all vectors live in a single quadrant. The second constraint ensures there is a sufficiently high ($\gg N$) multiplicity of directions given by $(m_i, n_i)/|(m_i, n_i)|$ available. The third constraint ensures that the vectors all have distinct directions. The last constraint ensures that $|v_i|\approx 1$. We note that our previous assumption $A\ge N^{2M\kappa/\epsilon}$ shows that we may rescale the collection of vectors $V_A$ by some integer $L_0\in\{x:\,1/50A\le |x|\le 50A\}$ so that $L_0V_A$ lives in the integer lattice $\mathbb{Z}^2$. Indeed, if we set $L_1$ to be the product of all the distinct $Q_i$ that satisfy $Q_i\le 1$, we may take $L_0$ to be anything of the form
$$L_0=jN^{m\kappa/\epsilon}L_1,\qquad j\in\mathbb{N}$$
chosen so that $L_0\in\{x:\,1/50A\le |x|\le 50A\}$, and this is possible if $A\ge N^{2M\kappa/\epsilon}\gg N^{m\kappa/\epsilon}\prod_{i:\,Q_i\le 1}Q_i^{-1}$.

\subsubsection*{A Geometric Reduction}
Note that by our construction, the difference in angle between any two distinct $v_i$'s is at least $\gtrsim 1/N^2$, so, taking $C_1$ sufficiently large $(\gg 1$), the intersection of any two combs $K_{r_1, v_1} \cap K_{r  _2, v_2}$ belongs to a small (say, $N^{-C_1/2}$) neighborhood of a grid, where the $x$-coordinates of all points in the grid belong to a lattice with generators parallel to $v_1^{\perp}$ and $v_2^{\perp}$. One can then compute that this lattice is given by
$$
\frac{\|v_1\|\|v_2\|}{|\left<v_1^{\perp}, v_2\right>|}\cdot\bigg\{\frac{a}{r_1}(v_1^{\perp})+\frac{b}{r_2}(v_2^{\perp})|\, a, b\in\mathbb{Z}, 0\lesssim a, b \lesssim A^2\bigg\}.
$$
Projecting onto the $x$-axis, we see that the $x$-coordinates of all points in this grid belong to an $O(N^{-C_1/2})$-nieghborhood of the rank $2$ arithmetic progression 
\begin{align}\label{e:SET}
\frac{\|v_1\|\|v_2\|}{|\left<v_1^{\perp}, v_2\right>|}\cdot\bigg\{\frac{a}{r_1}(v_1^{\perp})_x+\frac{b}{r_2}(v_2^{\perp})_x|\, a, b\in\mathbb{Z}, 0\lesssim a, b \lesssim A^2\bigg\}.
\end{align}
We thus obtain that $(K_{r_1, v_1}\cap K_{r_2, v_2})_x$ is contained in an $O(N^{-C_1/2})$ neighborhood of {\eqref{e:SET}}, and we may apply a similar argument for $(K_{r_1, v_1}\cap K_{r_2, v_2})_y$.

\subsubsection*{$N^{\epsilon}$-subcollections of $V$ have subsets that exhibit a certain degree of arithmetic independence}
Suppose now that $S \subset V$ has $|S| = N^{\epsilon}$, and choose an integer $K$ so that
\begin{align}
{K \choose \kappa}\approx N^{\epsilon/2}.
\end{align}
Then by Stirling's approximation,
\begin{align}\label{e:K}
K \approx_{\epsilon} N^{\frac{\epsilon^2}{2}}.
\end{align}

The following claim then follows from a counting argument.
\begin{claim}
For any choice of $S\subset V_A$ with $|S|=N^\epsilon$, one may choose at least $K$ different primes $\{ p_{l_1}, p_{l_2}, \ldots, p_{l_K} \} \subset P_M$ so that we can choose disjoint sub-collections $\{ S_1, \dots, S_K \}$ of $S$, each of cardinality $2$, so that if $S_j = \{ v_{i_j}, v_{i_j'}\}$, both
\begin{equation}\label{e:pdiv}
(v_{i_j})_y \cdot N^{M\kappa/\epsilon}\cdot Q_{i_j}^{-1}\text{ and }  (v_{i_j'})_y \cdot N^{M\kappa/\epsilon}\cdot Q_{i_j'}^{-1}
\end{equation}
are divisible by $p_{l_j}$\iffalse{, but $p_{l_k} \in P_m$ does not divide both factors in \eqref{e:pdiv} for any $k \neq j$}\fi.
\end{claim}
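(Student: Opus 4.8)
The plan is to prove the claim by a greedy extraction argument. Recall that each $v_i \in S$ comes with a $\kappa$-element subset $\Pi_i := \{p_{i_1},\dots,p_{i_\kappa}\} \subset P_M$ of the $N^{\epsilon/2}$ available primes, that each prime in $\Pi_i$ divides $(v_i)_y \cdot N^{M\kappa/\epsilon}\cdot Q_i^{-1}$ (this is immediate from the definition $(v_i)_y = n_i Q_i N^{-m\kappa/\epsilon} p_{i_1}\cdots p_{i_\kappa}$), and that the map $v_i \mapsto \Pi_i$ is injective on $V$. The task is thus purely combinatorial: given a family $\{\Pi_i : v_i \in S\}$ of $N^\epsilon$ distinct $\kappa$-subsets of an $N^{\epsilon/2}$-element ground set, find $K \approx_\epsilon N^{\epsilon^2/2}$ distinct primes $p_{l_1},\dots,p_{l_K}$ and $K$ pairwise disjoint pairs $S_j = \{v_{i_j}, v_{i_j'}\} \subset S$ such that $p_{l_j} \in \Pi_{i_j} \cap \Pi_{i_j'}$.

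**The greedy construction.** I would build the pairs one at a time. Suppose we have already produced $j-1 < K$ primes and $j-1$ disjoint pairs, using up at most $2(j-1) < 2K$ vectors of $S$ and $j-1$ primes. Consider the remaining vectors $S' \subseteq S$; we have $|S'| \geq N^\epsilon - 2K \geq \tfrac12 N^\epsilon$ since $K \approx_\epsilon N^{\epsilon^2/2} = o(N^\epsilon)$. Now count incidences between $S'$ and the ground set of primes: each $v_i \in S'$ contributes $\kappa$ incidences, for a total of $\kappa |S'| \geq \tfrac{\kappa}{2} N^\epsilon$ incidences distributed among $N^{\epsilon/2}$ primes. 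By pigeonhole there is a prime $p$ lying in $\Pi_i$ for at least $\tfrac{\kappa}{2} N^{\epsilon/2}$ of the vectors in $S'$. Since $\kappa \approx_\epsilon \epsilon^{-1} \geq 2$, this is at least $N^{\epsilon/2} \geq 2$ vectors, so we may pick two of them forming a new pair $S_j$ with $p_{l_j} := p$; we then must also avoid reusing $p$, which is fine since each round removes only one prime and we run for only $K$ rounds. To make sure we can always pick a *new* prime, note that after $j-1$ rounds at most $j-1 \leq K \leq \tfrac12 N^{\epsilon/2}$ primes are forbidden, so the pigeonhole average $\tfrac{\kappa}{2} N^{\epsilon/2}$ of incidences over the at most $N^{\epsilon/2}$ primes still leaves, after discarding the forbidden ones, a prime with multiplicity $\geq 2$ — indeed even restricting the count to non-forbidden primes, the total incidence count $\tfrac{\kappa}{2}N^\epsilon$ spread over $\leq N^{\epsilon/2}$ primes minus the at most $K \cdot (\text{max multiplicity} \le N^\epsilon)$ incidences through forbidden primes still dominates, once $M$ (hence $\kappa$) is taken large enough relative to how $K$ compares to $N^{\epsilon/2}$; cleanest is to observe $K = o(N^{\epsilon/2})$ so forbidden primes absorb a negligible fraction of incidences and a non-forbidden prime of multiplicity $\geq 2$ survives. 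Iterating $K$ times yields the desired $K$ primes and $K$ disjoint pairs.

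**Why the parameters work.** The two facts that make the iteration close are $K \approx_\epsilon N^{\epsilon^2/2} \ll N^{\epsilon/2}$ (so we never exhaust the prime ground set, and only ever forbid a vanishing fraction of primes) and $K \ll N^\epsilon$ (so after removing up to $2K$ paired-off vectors we retain a constant fraction of $S$, keeping the incidence count of order $\kappa N^\epsilon$ throughout). Both are guaranteed by the defining relations $\binom{K}{\kappa} \approx N^{\epsilon/2}$, $\binom{N^{\epsilon/2}}{\kappa}\approx N$ together with $\kappa \approx_\epsilon \epsilon^{-1}$, via Stirling as in \eqref{e:K}; I would simply cite these. The commented-out auxiliary clause "$p_{l_k}$ does not divide both factors for $k \neq j$" is not asserted in the live statement, so I will not prove it, though I note the greedy choice of distinct primes $p_{l_j}$ automatically gives $p_{l_k} \neq p_{l_j}$.

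**Main obstacle.** The only genuinely delicate point is bookkeeping in the pigeonhole step: one must simultaneously (i) avoid the $\leq K$ previously used primes, (ii) avoid the $\leq 2K$ previously used vectors, and (iii) still land a prime of multiplicity $\geq 2$. Each constraint individually costs a lower-order term, and the resolution is entirely quantitative — checking that $K$ is small enough against $N^{\epsilon/2}$ and $N^\epsilon$ that all three error terms are absorbed by the main incidence count $\kappa|S'| \gtrsim_\epsilon N^\epsilon$. Since $\kappa$ is a (large) constant depending only on $\epsilon$ and $K = N^{\epsilon^2/2 + o(1)}$, there is a comfortable power-of-$N$ margin, so the argument goes through once $N$ is large enough in terms of $\epsilon$ (equivalently, once $M$, and hence $\kappa$, is chosen large); for small $N$ the claim is vacuous or trivial by adjusting constants.
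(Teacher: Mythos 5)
Your overall strategy — greedy extraction of one disjoint pair at a time, with a pigeonhole count to produce a new prime — is the same as the paper's, and the bookkeeping on \emph{vectors} (after $j-1$ rounds you still hold $\gtrsim N^\epsilon$ of $S$) is fine. But the step that guarantees you can always land a \emph{new} prime has a genuine gap.

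You bound the incidences that pass through forbidden primes by $K \cdot (\text{max multiplicity}) \lesssim K N^\epsilon$, and assert this is dominated by the total incidence count $\kappa |S'| \approx \kappa N^\epsilon$. That comparison fails: $\kappa \approx_\epsilon \epsilon^{-1}$ is a constant independent of $N$, while $K \approx_\epsilon N^{\epsilon^2/2}$ grows polynomially, so $K N^\epsilon \gg \kappa N^\epsilon$ for large $N$. Increasing $M$ does not rescue this — $\kappa$ is fixed by $\binom{N^{\epsilon/2}}{\kappa}\approx N$ and is unaffected by $M$ — so the phrase ``once $M$ (hence $\kappa$) is taken large enough'' does not describe an available degree of freedom. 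The fallback observation that $K = o(N^{\epsilon/2})$ is a fraction-of-primes statement, not a fraction-of-incidences statement; nothing prevents incidences from concentrating on the $K$ forbidden primes, so ``forbidden primes absorb a negligible fraction of incidences'' does not follow.

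The missing ingredient — which you state but never deploy — is the injectivity of $v_i \mapsto \Pi_i$ (the constraint that no two $v_i$ carry the same $\kappa$-tuple of primes). A vector contributes \emph{zero} new-prime incidences only if $\Pi_i$ is entirely contained in the $\le K$ forbidden primes, and by injectivity the number of such vectors is at most $\binom{K}{\kappa} \approx N^{\epsilon/2}$, which is $\ll N^\epsilon$. Hence after removing both the $\lesssim K$ used vectors and the $\lesssim N^{\epsilon/2}$ ``all-forbidden'' vectors, there remain $\gtrsim N^\epsilon$ vectors each with at least one new-prime incidence; pigeonholing these $\gtrsim N^\epsilon$ incidences over $\le N^{\epsilon/2}$ new primes produces a new prime hitting $\gtrsim N^{\epsilon/2}\ge 2$ of them. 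This is precisely the paper's count, and it is the substitute for your $K N^\epsilon$ bound that actually closes the iteration.
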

The proof of this claim is essentially a counting argument. In particular, we inductively choose $S_j$ as follows. If $j\leq k$, take $S_j$ to be any two elements not contained in $\bigsqcup_{j'<j}S_{j'}$ such that the two elements are divisible by some common $p_{l_j}\notin\{p_{l_1}, \ldots, p_{l_{j-1}}\}$. If $j\le k$, it is easy to see that this is possible, since the number of elements $v$ of $V_A$ (and hence $S$) satisfying that $(v)_y$ is divisible only by $p_{l_1}, \ldots, p_{l_{j-1}}$ is $\ll N^{\epsilon}$. Thus $\gtrsim N^{\epsilon} $ elements of $S$ remain that are divisible by some prime $p_l$ with $l\ne l_1, \ldots, l_{j-1}$, and since there are $ N^{\epsilon/2} $ many primes to choose from and $\approx  N^{\epsilon}$ different elements of $S$ that have not yet been chosen, by pigeonholing there must be some $p_l=: p_{l_j}$ for which at least two remaining elements of $S$ are divisible by $p_l$. This completes the proof of the claim.
\newline
\newline
Now suppose we are given some $S\subset V_A$ with $|S|=N^{\epsilon}$. Then by the above claim, if $C_1\gg 100M\kappa/\epsilon^3$, we have
$$
 \frac{|\left<v_{i_j}^{\perp}, v_{i_j'}\right>|}{\|v_{i_j}\|\|w_{i_j}\|}\cdot N^{M\kappa/\epsilon} \cdot r_{i_j}r_{i_j'}\cdot\bigg(K_{r_{i_j}, v_{i_j}}\cap K_{r_{i_j'}, v_{i_j'}}\bigg)_x\subset p_{l_j}\mathbb{Z}+O(N^{-C_1/4}).
$$
Note that the scaling factor on the left hand side is chosen to make the rescaled $x$-projection of the lattice corresponding to  to $K_{r_{i_j}, v_{i_j}}\cap K_{r_{i_j'}, v_{i_j'}}$ be an integer. Now taking $N^{C_1}\gg A^{100}$, we may square both sides and obtain
$$
 \frac{|\left<v_{i_j}^{\perp}, v_{i_j'}\right>|^2}{\|v_{i_j}\|^2\|w_{i_j}\|^2}\cdot N^{2M\kappa/\epsilon} \cdot r_{i_j}^2r_{i_j'}^2\cdot\bigg(K_{r_{i_j}, v_{i_j}}\cap K_{r_{i_j'}, v_{i_j'}}\bigg)^2_x\subset p_{l_j}\mathbb{Z}+O(N^{-C_1/5}).
$$
Note that 
$$
 \frac{|\left<v_{i_j}^{\perp}, v_{i_j'}\right>|^2}{\|v_{i_j}\|^2\|w_{i_j}\|^2}=\frac{|m_{i_j}n_{i_j'}-n_{i_j}m_{i_j'}|}{(m_{i_j}^2+n_{i_j}^2)(m_{i_j'}^2+n_{i_j'}^2)}
 $$
 with $m_{i_j}, n_{i_j}, m_{i_j'}, n_{i_j'}\lesssim N^2$, we have
 $$
|m_{i_j}n_{i_j'}-n_{i_j}m_{i_j'}|\cdot N^{2M\kappa/\epsilon} \cdot r_{i_j}^2r_{i_j'}^2\cdot\bigg(K_{r_{i_j}, v_{i_j}}\cap K_{r_{i_j'}, v_{i_j'}}\bigg)^2_x\subset p_{l_j}\mathbb{Z}+O(N^{-C_1/6}).
$$
But this implies that $K'\le C_1/1000$, then we have
\begin{multline*}
\bigcap_{j=1}^{K'}\bigg(K_{r_{i_j}, v_{i_j}}\cap K_{r_{i_j'}, v_{i_j'}}\bigg)^2_x
\\
\subset \bigg[N^{2M\kappa/\epsilon}\prod_{j=1}^{K'}\bigg(|m_{i_j}n_{i_j'}-n_{i_j}m_{i_j'}|\cdot r_{i_j}^2r_{i_j'}^2\bigg)\bigg]^{-1}\cdot \bigg(\prod_{j=1}^{K'}p_{l_j}\bigg)\mathbb{Z}+O(N^{-C_1/7}).
$$
\end{multline*}
(Here the notation $(S)^2$ on the left hand side of the previous inequality  where $S=K_{r_{i_j}, v_{i_j}}\cap K_{r_{i_j'}, v_{i_j'}}$ indicates the pointwise squared elements of the set $S$, that is the set $\{x^2:\,x\in S\}$.) For $K'=C_1/1000$, we estimate
$$\bigg[N^{2M\kappa/\epsilon}\prod_{j=1}^{K'}\bigg(|m_{i_j}n_{i_j'}-n_{i_j}m_{i_j'}|\cdot r_{i_j}^2r_{i_j'}^2\bigg)\bigg]\lesssim N^{(2M/\epsilon^3+C_1/100)/\epsilon}.$$
But
$$\prod_{j=1}^Kp_{l_j}\gtrsim N^{C_1M/(1000\epsilon)}.$$
For $M$ sufficiently large $(i.e. M\gg 1000\epsilon$) we thus have 
$$
\bigg|\bigg[N^{2M\kappa/\epsilon}\prod_{j=1}^{K'}\bigg(|m_{i_j}n_{i_j'}-n_{i_j}m_{i_j'}|\cdot r_{i_j}^2r_{i_j'}^2\bigg)\bigg]^{-1}\cdot \bigg(\prod_{j=1}^{K'}p_{l_j}\bigg)(\mathbb{Z}\setminus\{0\}\bigg|\gg N^{C_1/100}.$$
But $N^{C_1/100}\gg A^2\approx N^{20M/\epsilon^3}$ if $C_1$ is sufficiently large $(C_1\gg 2000 M/\epsilon^3$). Since
$$
\bigcap_{j=1}^{K'}\bigg(K_{r_{i_j}, v_{i_j}}\cap K_{r_{i_j'}, v_{i_j'}}\bigg)^2_x\subset [-1000A^2, 1000A^2],
$$
it follows that
$$
\bigcap_{j=1}^{K'}\bigg(K_{r_{i_j}, v_{i_j}}\cap K_{r_{i_j'}, v_{i_j'}}\bigg)^2_x\subset [-N^{-C_1/7}, N^{-C_1/7}].
$$
By a symmetric argument, we may also show that 
$$
\bigcap_{j=1}^{K'}\bigg(K_{r_{i_j}, v_{i_j}}\cap K_{r_{i_j'}, v_{i_j'}}\bigg)^2_y\subset [-N^{-C_1/7}, N^{-C_1/7}],
$$
and hence
$$\bigcap_{j=1}^{K'}\bigg(K_{r_{i_j}, v_{i_j}}\cap K_{r_{i_j'}, v_{i_j'}}\bigg)\subset B_0(N^{-C_1/14}).$$
But for $C_1$ sufficiently large, we have $B_0(N^{-C_1/14})\subset B_{A^{-1}(0)}$, and by definition any comb $K_{r_{i_j}, v_{i_j}}$ has empty intersection with $B_{A^{-1}}(0)$, and so this completes the proof.

\section{The Proof of Theorem \ref{linthm} and Corollary \ref{cor:multiscalemax}}\label{s:easyproof}
We now note that Theorem \ref{linthm} follows by the previous estimate for $C(s, N)$. Indeed, as previously alluded to, we show that an upper bound for
\begin{equation}\label{e:opNorm}
\| A_{V,k} \|_{\ell^2(\mathbb{Z}^2) \to \ell^2(\mathbb{Z}^2)}
\end{equation}
 is given by the sum of the following two terms:
\begin{equation}\label{e:term2}
\| \sup_{v \in V} |A_{V,k} (\varphi*f)(x)| \|_{\ell^2 \to \ell^2}
\end{equation}
and
\begin{equation}\label{e:term1}
\sup_{ |\beta| \gtrsim A^{-2} } \left( \sum_{v \in V} \sum_{0 < m \lesssim A} \mathbf{1}_{|v \cdot \beta - m | \ll 2^{-k}}(\beta) \right)^{1/2}.
\end{equation}
In \eqref{e:term2}, $\varphi$ is an appropriate Schwartz function with Fourier transform supported in $\{ |\xi| \lesssim A^{-1} 2^{-k} \}$. Clearly, we have
\begin{multline}
\| A_{V,k} \|_{\ell^2(\mathbb{Z}^2) \to \ell^2(\mathbb{Z}^2)}
\\
\lesssim\| \sup_{v \in V} |A_{V,k} (\varphi*f)(x)| \|_{\ell^2 \to \ell^2}+\| \sup_{v \in V} |A_{V,k} (\varphi\ast f-f)(x)| \|_{\ell^2 \to \ell^2}
\end{multline}
So it suffices to show that
\begin{equation}
\| \sup_{v \in V} |A_{V,k} (\varphi\ast f-f)(x)| \|_{\ell^2 \to \ell^2}^2\lesssim \sup_{ |\beta| \gtrsim A^{-2}}  \sum_{v \in V} \sum_{0 \leq m \lesssim A} \mathbf{1}_{|v \cdot \beta - m | \ll 2^{-k}}(\beta).
\end{equation}
Dominating the $\sup$ in $v$ by a square sum, we have
\begin{equation}\label{t1}
\| \sup_{v \in V} |A_{V,k} (\varphi\ast f-f)(x)| \|_{\ell^2}^2\lesssim\sum_{v\in V}\bigg\|A_{v, k}(\varphi*f-f)\bigg\|_{\ell^2}^2,
\end{equation}
where $A_{v, k}f=\sum_{n\in\mathbb{Z}^2}f(x-nv)\phi_k(n)$.
Now note that the operator $A_{v, k}$ is a multiplier operator given by
\begin{multline}
A_{V, k}f=\sum_{n\in\mathbb{Z}}f(x-nv)\phi_k(n)=\int_{\mathbb{T}^2}\sum_{n\in\mathbb{Z}}\phi_k(n)\widehat{f}(\beta)e((x-nv)\cdot\beta)
\\
=\int_{\mathbb{T}^2}\widehat{f}(\beta)e(x\cdot\beta)\bigg(\sum_{n\in\mathbb{Z}}\phi_k(n)e(-nv\cdot\beta)\bigg)\,d\beta=\int_{\mathbb{T}^2}\widehat{f}(\beta)e(x\cdot\beta)\widehat{\phi_k}(-v\cdot\beta)\,d\beta,
\end{multline}
and hence $A_{V, k}$ is a Fourier multiplier operator with multiplier $\widehat{\phi_k}(-v\cdot\beta)$. Composing $A_{V, k}$ with the operator with multiplier $(1-\phi)$ is hence dominated pointwise by the restriction to $\mathbb{T}^2$ by $1_{K^A_{k, v}}$, where here $K^A_{k, v}$ (off of the ball centered at the origin of radius $\approx A^{-2}$) is the characteristic function of a union of parallel tubes of thickness $\approx 2^{-k}A^{-1}$ spaced apart by $\approx A^{-1}$, with long axes perpendicular to $v$. That is, we recall that we have defined
$$
K^A_{k, v}:=\{\beta\in\mathbb{T}^2, |\beta|\gtrsim A^{-2}:\,|v\cdot\beta-m|\lesssim 2^{-k}A^{-1}\,\text{for some }m\in\mathbb{Z}\}.
$$
Clearly, $A_{v, k}$ is bounded on $\ell^2(\mathbb{Z}^2)$, and hence by Plancherel the right hand side of (\ref{t1}) is bounded by
$$
\sum_{v\in V}\|1_{K_{A, k, v}}\widehat{f}\|_{\ell^2}^2\lesssim\bigg\|\bigg(\sum_{v\in V}|1_{K_{A, k, v}}\widehat{f}|^2\bigg)^{1/2}\bigg\|_{\ell^2}^2
\lesssim\bigg\|\bigg(\sum_{v\in V}1_{K_{A, k, v}}|^2\bigg)^{1/2}\bigg\|_{\ell^{\infty}}\|f\|_{\ell^2},$$
and hence the $\ell^2(\mathbb{Z}^d)\to\ell^2(\mathbb{Z}^d)$ operator norm is given by (\ref{e:term1}). Note that for the choice of $s$ such that $2^s=N$ and for $2^k\ge N^{C_0}$ (which we recall is a hypothesis from Theorem \ref{linthm}), we have that 
$$K_{k, v}^A\subset K_{r, s, v}^A$$
for any $r$. Thus by our incidence estimate with $2^s=N$, this term is $\lesssim_{\epsilon}N^{\epsilon}$ when $V=V_{N, \epsilon}$ and $2^k\ge N^{C_0}$.
We begin with \eqref{e:term2}:

Consider the following decomposition:
\[ A_{V,k} (\varphi*f) = A_{V,k}^{\mathbb{R}} f + \mathcal{E}_v f,\]
where $A_{V,k}^{\mathbb{R}}$ is a convolution operator with multiplier given by
\[ m_v(\beta) := m_{v,k}(\beta) := \hat{\varphi}(\beta) \cdot \sum_{m \in \mathbb{Z}} \widehat{ \phi_k }( v \cdot \beta - m), \]
and $\{ \mathcal{E}_v :v \}$ are error terms, with uniformly small Fourier coefficients:
\[ \sup_{v, \beta} |\widehat{\mathcal{E}_v}(\beta)| \lesssim 2^{-k}.\]
(Here, we are conflating the convolution operator and its multiplier.)
In particular, we may estimate
\[\|  \sup_{v} |\mathcal{E}_v f| \|_{\ell^2} \lesssim N^{1/2} \cdot 2^{-k} \cdot \|f\|_{\ell^2}.\]

To handle the contribution of $A_{V,k}^{\mathbb{R}}$ we will need the following special case of a beautiful transference argument of Magyar, Stein, and Wainger \cite[Lemma 2.1]{MSW}.

\begin{lemma}\label{trans}
Let $B_1,B_2$ be finite-dimensional Banach spaces, and 
\[ m: \RR^2 \to L(B_1,B_2) \]
be a bounded function supported on a cube with side length one containing the origin that acts as a Fourier multiplier from
\[ L^p(\RR^2,B_1) \to L^p(\RR^2, B_2),\]
for some $1 \leq p \leq \infty.$ Here, $L^p(\RR^2,B):= \{ f: \RR^2 \to B : \| \| f\|_B \|_{L^p(\RR^2)} < \infty\}$.
Define
\[ m_{\text{per}}(\beta) := \sum_{l \in \Z^2} m(\beta - l) \ \text{ for } \beta \in \TT^2.\]
Then the multiplier operator
\[ \| m_{\text{per}} \|_{\ell^p(\Z^2,B_1) \to \ell^p(\Z^2,B_2)} \lesssim \| m \|_{L^p(\RR^2,B_1) \to L^p(\RR^2,B_2)}.\]
The implied constant is independent of $p, B_1,$ and $B_2$.
\end{lemma}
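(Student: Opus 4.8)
The plan is to prove Lemma~\ref{trans} by the transference argument of Magyar--Stein--Wainger \cite[Lemma 2.1]{MSW}: one realizes the periodic multiplier operator on $\Z^2$ as the restriction to the lattice of the Euclidean multiplier operator applied to a suitable lift of $f$, exploiting that $m_{\mathrm{per}}$ literally coincides (a.e.) with $m$ on a fundamental domain. Throughout I would fix a finitely supported $f\colon\Z^2\to B_1$ (the general case following by density), write $\widehat f(\beta)=\sum_y f(y)e(-y\cdot\beta)$ for the associated $\Z^2$-periodic function on $\RR^2$, and let $T_{\mathrm{per}}$ and $T_{\RR}$ denote the multiplier operators attached to $m_{\mathrm{per}}$ and to $m$, so that $\|T_{\RR}\|_{L^p(\RR^2,B_1)\to L^p(\RR^2,B_2)}\le\|m\|_{L^p\to L^p}$ by hypothesis.

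First I would normalize the location of the support. If $Q$ is the unit cube carrying $m$ and $\beta_\ast$ is its center, then $\beta_\ast\in[-\tfrac12,\tfrac12]^2$ because $0\in Q$; replacing $m(\xi)$ by $m(\xi+\beta_\ast)$ conjugates $T_{\RR}$ and $T_{\mathrm{per}}$ by the unimodular modulations $x\mapsto e(\pm\beta_\ast\cdot x)$, which are isometries of the relevant $L^p$ and $\ell^p$ spaces, so neither operator norm is affected. Hence I may assume $\supp m\subseteq[-\tfrac12,\tfrac12]^2$, a closed fundamental domain on which $m_{\mathrm{per}}=m$ almost everywhere. Next I would lift $f$: choose a smooth compactly supported $\chi$ on $\RR^2$ with $\chi\equiv1$ on $[-\tfrac12,\tfrac12]^2$, let $\Psi$ be its Euclidean inverse Fourier transform (a Schwartz function), and set $G(x):=\sum_{y\in\Z^2}f(y)\,\Psi(x-y)$, a Schwartz $B_1$-valued function. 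Then $G$ has Fourier transform $\chi(\xi)\widehat f(\xi)$, so $T_{\RR}G$ has Fourier transform $m(\xi)\widehat f(\xi)$, compactly supported in $\supp m$; sampling at $n\in\Z^2$ and using the $\Z^2$-periodicity of $\widehat f$ together with $\supp m\subseteq[-\tfrac12,\tfrac12]^2$ one finds $(T_{\RR}G)(n)=T_{\mathrm{per}}f(n)$ exactly. The rapid decay of $\Psi$, $|\Psi(z)|\lesssim_N(1+|z|)^{-N}$, lets one group $\RR^2$ into unit cells and dominate $\|G(x)\|_{B_1}$ on each cell by a discrete convolution of the sequence $n\mapsto\|f(n)\|_{B_1}$ against an $\ell^1(\Z^2)$ kernel, whence $\|G\|_{L^p(\RR^2,B_1)}\lesssim\|f\|_{\ell^p(\Z^2,B_1)}$ by Young's inequality, with constant depending only on $\chi$ --- in particular independent of $p$ and of $B_1$.

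The remaining, and I expect most delicate, ingredient is the sampling (Plancherel--Pólya) estimate $\|\,H|_{\Z^2}\,\|_{\ell^p(\Z^2,B_2)}\lesssim\|H\|_{L^p(\RR^2,B_2)}$ for $H:=T_{\RR}G$, which is band-limited to the \emph{fixed} compact set $[-\tfrac12,\tfrac12]^2$; the subtlety is precisely that Lemma~\ref{trans} asks for a constant independent of $p$ and of $B_1,B_2$. I would fix a Schwartz $\Theta$ with $\widehat\Theta\equiv1$ on $[-\tfrac12,\tfrac12]^2$, so that $H=H\ast\Theta$ and Bernstein's inequality gives $\|\partial^\alpha H\|_{L^p(\RR^2,B_2)}\le\|\partial^\alpha\Theta\|_{L^1(\RR^2)}\|H\|_{L^p(\RR^2,B_2)}$ with $p$-free constants; combining this with the $L^1$-Sobolev embedding $W^{3,1}\hookrightarrow L^\infty$ on the unit-sized cubes $n+(-1,2)^2$ and Hölder's inequality bounds $\|H(n)\|_{B_2}$ by $\sum_{|\alpha|\le3}\|\partial^\alpha H\|_{L^p(n+(-1,2)^2,B_2)}$, and summing the $p$-th powers over $n\in\Z^2$ (bounded overlap of the cubes) then taking $p$-th roots yields the claim with an absolute constant; the vector-valued forms of Bernstein's inequality and of the Sobolev embedding cause no difficulty. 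Chaining the three steps gives $\|T_{\mathrm{per}}f\|_{\ell^p(\Z^2,B_2)}=\|H|_{\Z^2}\|_{\ell^p(\Z^2,B_2)}\lesssim\|H\|_{L^p(\RR^2,B_2)}\le\|m\|_{L^p\to L^p}\|G\|_{L^p(\RR^2,B_1)}\lesssim\|m\|_{L^p\to L^p}\|f\|_{\ell^p(\Z^2,B_1)}$, with all implied constants independent of $p,B_1,B_2$, which is the lemma. The one bookkeeping point I would flag is the closed-cube versus half-open-fundamental-domain mismatch, which is harmless since it only affects a measure-zero set in the integrals defining $m_{\mathrm{per}}$ and $T_{\mathrm{per}}$.
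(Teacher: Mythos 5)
Your argument is a correct reconstruction of the Magyar--Stein--Wainger transference proof (which the paper does not reproduce but simply cites as \cite[Lemma 2.1]{MSW}): normalize the support cube by a modulation, lift $f$ to a band-limited Schwartz function via a bump $\Psi$ with $\widehat\Psi\equiv 1$ on the cube, observe that $T_{\RR}G$ restricted to $\Z^2$ coincides with $T_{\mathrm{per}}f$, and combine the lifting bound $\|G\|_{L^p}\lesssim\|f\|_{\ell^p}$ with a Plancherel--P\'olya sampling estimate. Your implementation of the sampling step via Bernstein and the $L^1$-Sobolev embedding is slightly different in detail from the usual direct convolution-and-Young argument, but it is equally valid and, as you correctly flag, yields constants uniform in $p$ and in the Banach spaces.
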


In particular, we will apply Lemma \ref{trans} for
\[ m(\beta) := \{ m_v(\beta) : v \in V \} \]
which takes values in 
\[ \ell^2(\mathbb{Z}^2;\ell^{\infty}(V)) := \{ f = \{ f_v : v \in V\} : \| \sup_{v \in V} |f_v| \|_{\ell^2(\mathbb{Z}^2)} < \infty \};\]
this accrues a norm loss of $\lesssim_\epsilon N^{\epsilon}$ by the continuous theory.

As for \eqref{e:term1}, we may bound this above by $N^{\epsilon}$ using the above construction (the argument slightly simplifies as no rational shifts are needed); note the necessity of the condition that $k \gg \log N$ (this has the effect of ensure that the quantity $C_0$ used in \S \ref{s:inc} above is sufficiently large).
%Define $\tilde{C}(k, N)$ to be the quantity defined similarly to $C(k, N)$ but with much ``further extended'' combs, i.e. extended to a ball of radius $N^{C_0/\delta}$. Note that the above argument allows one to obtain similar bounds for $\tilde{C}(k, N)$ as $C(k, N)$; note in particular the last step needs only to be modified by taking $N$ large enough so that the last quantity $\beta_0$ is much greater than $N^{C_0/\delta}$ instead of $1$. Thus we obtain $\tilde{C}(k, N)\lesssim_{\epsilon}N^{\epsilon}$ for the $\epsilon$-good collection of vectors we construct, and by rescaling we have
%$$\| A_{V,k} \|_{\ell^2(\mathbb{Z}^2) \to \ell^2(\mathbb{Z}^2)}\lesssim (\tilde{C}(k, N))^{1/2}.$$

\bigskip

To extend this result to Corollary \ref{cor:multiscalemax}, we simply observe that
\[ \left( \sum_{v} \sup_{j \geq k} \left| \sum_n f(x-n\cdot v) \cdot \phi_j(n) \right|^2 \right)^{1/2} \]
has $\ell^2$ norm dominated by 
\[ \left( \sum_{v} \left| \sum_n f(x-n\cdot v) \cdot  \tilde{\phi}_k(n) \right|^2 \right)^{1/2} \]
for an appropriate choice of $\phi, \tilde{\phi}$; this follows from the uniform-in-$v\in\mathbb{Z}^2$ $\ell^2$ boundedness of the maximal function
\[ \sup_N \left| \frac{1}{N}  \sum_{n\leq N} f(x-n\cdot v) \right|. \]
This boundedness may be seen by first establishing the weak-type $1-1$ estimate via the Hopf-Dunford-Schwartz maximal inequality from ergodic theory; the method of $TT^*$ also suffices.

\section{The Proof of Theorems \ref{t:2avg} and \ref{t:avg}}\label{s:squares}
The goal of this section is to prove Theorem \ref{t:2avg}, reproduced below for the reader's convenience; Theorem \ref{t:avg} will follow by only minor modifications to the method.
\begin{theorem}
Let $\epsilon > 0$ be arbitrary. Then, for any $N$, there exists a set of vectors $|V| = |V_{N,\epsilon}| = N$ so that, if $k \gg_\epsilon \log N$,
\begin{equation}\label{e:maxpbound}
\| A^{2}_{V,k} f \|_{\ell^2} \lesssim_\epsilon N^{\epsilon} \cdot \| f \|_{\ell^2}.
\end{equation}
\end{theorem}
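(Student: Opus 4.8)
\textbf{Proof plan for Theorem \ref{t:2avg}.}
The strategy is to run the same transference-plus-incidence scheme used to prove Theorem \ref{linthm} in \S\ref{s:easyproof}, but now accounting for the arithmetic structure of the multiplier attached to the sequence $\{n^2\}$. First I would split $A^2_{V,k}(\varphi * f) = A^{2,\RR}_{V,k}f + \mathcal{E}_v f$ as in \S\ref{s:easyproof}, where $\varphi$ smooths out frequencies at scale $\gtrsim A^{-1} 2^{-2k}$, and the error terms $\mathcal{E}_v$ have Fourier coefficients $O(2^{-k})$ so that $\|\sup_v|\mathcal{E}_v f|\|_{\ell^2} \lesssim N^{1/2} 2^{-k}\|f\|_{\ell^2}$, which is harmless once $k \gg_\epsilon \log N$. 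The main-term piece $A^{2,\RR}_{V,k}$ is handled by the Magyar--Stein--Wainger transference lemma (Lemma \ref{trans}) applied to the vector-valued multiplier $\{m_v : v\in V\}$ taking values in $\ell^2(\Z^2;\ell^\infty(V))$; the continuous directional maximal estimate then costs only $\lesssim_\epsilon N^\epsilon$.

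The heart of the matter is the remaining ``singular'' part of the multiplier, i.e.\ the operator whose symbol is
\[
\sum_{n} e(-n^2 (v\cdot\beta))\phi_k(n) - \widehat{\varphi}(\beta)\cdot(\text{main term}).
\]
Here I would invoke the classical circle-method expansion, via Hua's bounds \cite[\S7, Theorem 10.1]{Hua} (this is exactly the heuristic recorded in \eqref{e:inc1} and \S\ref{ss:app}): up to acceptable errors, this symbol is dominated by a sum over denominators $q \le C_\epsilon 2^{\epsilon k}$ of Gauss-sum-weighted bumps localized to $\{ \|v\cdot\beta - a/q\| \lesssim 2^{-2k}\}$ with $(a,q)=1$. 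Dominating the $\sup_{v\in V}$ by an $\ell^2_v$ sum and applying Plancherel, the $\ell^2\to\ell^2$ norm squared is then bounded by
\[
\sup_{|\beta|\gtrsim A^{-2}} \ \sum_{v\in V}\ \sum_{q \le C_\epsilon 2^{\epsilon k},\,(a,q)=1}\ \sum_{m\in\Z}\mathbf{1}_{|v\cdot\beta - m - a/q|\lesssim 2^{-2k}}(\beta),
\]
possibly after a further dyadic decomposition $q \approx 2^s$ and an application of Cauchy--Schwarz or the triangle inequality to sum the $O(\log N)$ dyadic scales at a cost of $N^{o(1)}$. But this is precisely the quantity $C(s,V)$ (or a sum of such quantities over $N^\epsilon \le 2^s \le 2^{\epsilon k}$) that the incidence estimate Proposition \ref{t:C(s,N)fornow1} controls: for the collection $V = V_{N,\epsilon}$ of that proposition, each $C(s,V) \lesssim_\epsilon N^\epsilon$ provided $C_1 = C_1(A)$ — equivalently $k \gg_\epsilon \log N$ — is large enough to make the tube thickness $2^{-C_1 s}$ dominate $2^{-2k}$ and the ``high-frequency'' cutoff. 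Summing over the $\lesssim \epsilon^{-1}\log N$ relevant scales $s$ and absorbing into a slightly smaller $\epsilon$ gives the bound $N^\epsilon$.

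I expect the main obstacle to be the bookkeeping that reconciles the two roles of the parameter controlling $q$: in the circle-method expansion the natural cutoff is $q \lesssim 2^{\epsilon k}$ (which is $\gg N^{1/\epsilon}$ once $k \gg_\epsilon \log N$, hence far larger than the range $2^{s-1}\le r < 2^s$ with $2^s \le N^{1/\epsilon}$ for which Proposition \ref{t:C(s,N)fornow1} is stated), so one must either (i) observe that the contribution of $q$ with $2^s \ge N^{1/\epsilon}$ can be discarded because the Gauss-sum weights decay like $q^{-1/2+o(1)}$ and there are too few $v\in V$ whose direction is that arithmetically structured, or (ii) re-run the incidence argument of \S\ref{s:inc} directly for the larger range, which works verbatim since the construction of $V_{N,\epsilon}$ only used that the primes $p_{l_j}$ attached to distinct vectors force denominators $\prod p_{l_j} \gg N^{C_1/100}$, a bound that is insensitive to how large $r$ is allowed to be (the relevant inequality $q_{J_1^1}\cdot T\cdot\prod r_j r_j' \lesssim N^{2(C_0/5 + |E|/\epsilon)}$ already tolerates $r_j \le 2^{\epsilon k}$ once $k \gg_\epsilon \log N$). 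Option (i) is cleaner and is what I would write up: the Gauss sum decay turns the extra $q$-range into a convergent geometric series, so only finitely many dyadic scales $s$ matter, each handled by the incidence estimate. The passage from Theorem \ref{t:2avg} to Theorem \ref{t:avg} is then immediate, replacing Hua's bound for $\sum e(-n^2\alpha)$ by the general Weyl/Hua bound for $\sum e(-P(n)\alpha)$ of degree $d$, which changes only the numerology ($2^{-2k} \rightsquigarrow 2^{-dk}$, constants depending on $d$).
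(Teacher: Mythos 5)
Your proposal correctly identifies the overall architecture the paper uses — circle-method (major/minor arc) expansion of $\widehat{K_k}$ via Hua/Weyl, domination of the supremum by a square sum, Plancherel, and then the incidence estimate $C(s,V)$ from Proposition~\ref{t:C(s,N)fornow1} — and your ``option (i)'' (Gauss-sum decay to kill the range $2^s > N^{1/\epsilon}$) is exactly what the paper does for the large-$s$ regime, where the square-sum bound $N^{1/2}\max_{a/q\in\mathcal R_s}|S(a/q)|\lesssim N^{1/2}2^{-s/2}$ suffices. However, there is a genuine gap at the opposite end: you do not address the range $2^s < N^{\epsilon}$, which falls \emph{outside} the hypotheses of Proposition~\ref{t:C(s,N)fornow1}. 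In that range the square-sum-then-Plancherel reduction you propose only yields the trivial bound $2^{-s}C(s,V)\le 2^{-s}N$, whose $s=1$ term is already of size $\approx N$ and hence far too large. The paper handles this regime with a different argument: for $s\ll\log N$ it does \emph{not} pass to an $\ell^2_v$ sum, but instead uses the triangle inequality over the $\lesssim 2^{2s}$ rationals $a/q\in\mathcal R_s$, invokes the linear theory (Theorem~\ref{linthm}, via the Magyar--Stein--Wainger transference) for each modulated maximal operator $\sup_v\bigl|\bigl(V_k(v\cdot\beta-a/q-m)\chi_k(\cdot)\hat f\bigr)^{\vee}\bigr|$, and keeps the Gauss-sum factor $2^{-s/2}$ to obtain $2^{3s/2}N^{\epsilon}$ — which sums acceptably precisely because $2^s < N^{\epsilon}$ there. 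Relatedly, the incidence quantity you display after Plancherel should carry the weights $|S(a/q)|^2\approx 2^{-s}$; these are what make the $s$-sum converge in the middle regime, so they need to be tracked explicitly rather than folded into ``$N^{o(1)}$ over $O(\log N)$ scales.''
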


Our analysis combines discrete-harmonic analytic techniques, the incidence estimates from \S \ref{s:inc}, and the estimate for the linear (non-Radon) maximal function, \eqref{e:Aphi}.

We turn to the details. The initial reductions we make are number-theoretic.

\subsection{Approximations}\label{ss:app}
In what follows, we will assume that $k \gg_\epsilon \log N$. 

Set 
\begin{equation}\label{e:onedker}
K_k:= \sum_{n \in \mathbb{Z}} \phi_k(n) \cdot  \delta_{n^2}, 
\end{equation}
where $\delta_m$ is the point-mass at the point $m$; with this in mind, we may view
\[ A^{2}_{V,k} f \]
as a maximal multiplier operator, where the multipliers are given by 
\[ \{ \widehat{K_k}(v \cdot \beta) : v \}. \]

In particular, it is the (one-dimensional) Fourier transform of $K_k$ that we need to consider.

We will construct our analytic family of approximating multipliers by analyzing the behavior of $\widehat{K_k}$ on the \emph{major arcs}, which we now proceed to define. A reference for the material appearing below is \cite[\S 5-6]{B1}.

\subsubsection{Major Arcs}
Throughout this section, $\epsilon_0,\epsilon_1 \ll_\epsilon 1$ are sufficiently small constants (as one sends $\epsilon_0,\epsilon_1$ down to zero, the size of $k$ relative to $\log N$ will increase.)

We begin by collecting the rationals in the torus according to the size of their denominators; roughly speaking, these sets form approximate level sets for appropriate complete exponential sums (below).

For each $s \geq 1$, set
\begin{equation}\label{e:Rs}
\R_s := \{ a/q  \in \TT \text{ reduced} : 2^{s-1 }\leq q < 2^{s} \}.
\end{equation}

\begin{definition}  \label{d:major}
For $a/q \in  \mathcal R_s$, where $ s\leq \epsilon_0 k $, define the \emph{$ k$th major arc at $a/q$} to be given by 
\begin{equation}\label{e:major}
\mathfrak{M}_k(a/q) := \left\{ \beta \in \mathbb{T} : | \alpha - a/q| \leq 2^{(\epsilon_1 - 2)k} \right\}.
\end{equation}
\end{definition}

On each $\mathfrak{M}_k(a/q)$, we may extremely accurately approximate the multiplier $\widehat{K_k}(\alpha)$. To do so, we introduce the complete exponential sums,
\begin{equation}\label{WEYL}
S(a/q) := \frac{1}{q} \sum_{r \leq q} e(- a/q \cdot r^2), \ (a,q) = 1. 
\end{equation}
By ``completing the square'' (and using periodicity of the phases $r \mapsto e(- a/q \cdot r^2)$, it is straightforward to check that
\begin{equation}\label{e:sqrtcan}
|S(a/q)| \lesssim q^{-1/2}.
\end{equation}
If one wishes to replace the monomial $r \mapsto r^2$ with a more general integer-valued polynomial $P$ of degree $d$, the estimate we use is due to Hua, \cite[\S 7, Theorem 10.1]{Hua}, where the savings are on the order of $q^{\epsilon - 1/d}$. For our purposes here, the key point is that, for each $d$, there exists some $\delta_d > 0$ so that $|S(a/q)| \lesssim q^{-\delta_d}$.

We will also need the continuous analogue of our discrete Fourier transform,
\begin{equation}\label{e:V_k1}
V_k(\xi) := \int e(-2^{2k}t^2 \xi) \phi(t) \ dt.
\end{equation}

Our main ``local'' characterization of $\widehat{K_k}(\alpha)$ is contained in the following proposition.
\begin{proposition}\label{p:onMAJ}
On $\mathfrak{M}_k(a/q)$, we may express
\begin{equation}\label{onMAJ}
\widehat{K_k}(\alpha) = S(a/q) V_k(\alpha - a/q) + O(2^{(2\epsilon_1 - 1)k}).
\end{equation}
\end{proposition}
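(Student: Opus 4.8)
The plan is a standard circle-method expansion of the exponential sum defining $\widehat{K_k}$, restricted to a single major arc. Write $\alpha = a/q + \theta$ with $|\theta| \leq 2^{(\epsilon_1 - 2)k}$ and $(a,q)=1$, $2^{s-1} \leq q < 2^s$, $s \leq \epsilon_0 k$. By definition,
\[
\widehat{K_k}(\alpha) = \sum_{n \in \mathbb{Z}} \phi_k(n)\, e(-n^2 \alpha) = \sum_{n \in \mathbb{Z}} \phi_k(n)\, e(-n^2 a/q)\, e(-n^2 \theta).
\]
The first step is to split the sum over $n$ into residue classes $n \equiv r \pmod q$, so that $e(-n^2 a/q) = e(-r^2 a/q)$ is constant on each class; this produces
\[
\widehat{K_k}(\alpha) = \sum_{r \leq q} e(-r^2 a/q) \sum_{\substack{n \in \mathbb{Z} \\ n \equiv r \,(q)}} \phi_k(n)\, e(-n^2 \theta).
\]
The inner sum over an arithmetic progression of common difference $q$, against the smooth rapidly-decaying weight $\phi_k$ (which lives at scale $2^k \gg q$ since $q < 2^s \leq 2^{\epsilon_0 k}$), should be approximated by $q^{-1}$ times the corresponding integral — this is the main technical point and is where I expect the bookkeeping to concentrate.

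For that approximation I would apply Poisson summation (or a direct comparison of a smooth sum to an integral, via the Euler–Maclaurin/Poisson estimate): writing $n = r + qm$,
\[
\sum_{m \in \mathbb{Z}} \phi_k(r + qm)\, e(-(r+qm)^2 \theta) = \frac{1}{q}\int_{\mathbb{R}} \phi_k(t)\, e(-t^2 \theta)\, dt + (\text{error}),
\]
where the error is controlled by the tails of the Fourier transform of the function $t \mapsto \phi_k(t) e(-t^2 \theta)$ evaluated at nonzero multiples of $1/q$. Since $\phi$ is Schwartz and $2^{2k}|\theta| \leq 2^{\epsilon_1 k}$, the function $\phi_k(t)e(-t^2\theta)$ has Fourier transform concentrated on $|\xi| \lesssim 2^{-k}\cdot 2^{\epsilon_1 k}$ with rapid (Schwartz) decay beyond that; the nonzero frequencies $1/q \gtrsim 2^{-\epsilon_0 k}$ therefore contribute $O_M(2^{-Mk})$ for every $M$ once $\epsilon_0, \epsilon_1$ are small enough, comfortably absorbed into $O(2^{(2\epsilon_1-1)k})$. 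Rescaling $t \mapsto 2^k t$ in the main integral and recalling $\phi_k(t) = 2^{-k}\phi(2^{-k}t)$ turns $q^{-1}\int \phi_k(t) e(-t^2\theta)\,dt$ into exactly $q^{-1}\int \phi(t) e(-2^{2k}t^2\theta)\,dt = q^{-1} V_k(\theta) = q^{-1} V_k(\alpha - a/q)$.

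Summing back over $r \leq q$ gives
\[
\widehat{K_k}(\alpha) = \Big(\tfrac{1}{q}\sum_{r \leq q} e(-r^2 a/q)\Big) V_k(\alpha - a/q) + \sum_{r \leq q} e(-r^2 a/q)\cdot O_M(2^{-Mk}) = S(a/q)\, V_k(\alpha - a/q) + O(2^{(2\epsilon_1-1)k}),
\]
since the $q \leq 2^{\epsilon_0 k}$ residues each contribute a super-polynomially small error. The main obstacle, as noted, is making the Poisson-summation error estimate uniform in $q \lesssim 2^{\epsilon_0 k}$ and in $|\theta| \lesssim 2^{(\epsilon_1-2)k}$ — i.e. checking that the derivatives of $\phi_k(t)e(-t^2\theta)$ are small enough, after accounting for the oscillatory factor whose phase has second derivative of size $2^{2k}|\theta| \lesssim 2^{\epsilon_1 k}$; this forces $\epsilon_1$ (and hence the error exponent $2\epsilon_1 - 1 < 0$) to be chosen small relative to the implied Schwartz constants, which is precisely the dependence on $\epsilon_0,\epsilon_1$ flagged before Definition \ref{d:major}. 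For the general polynomial $P$ of degree $d$ the identical argument runs with $S(a/q)$ replaced by the complete sum $q^{-1}\sum_{r\leq q} e(-P(r)a/q)$ and $V_k$ by the corresponding oscillatory integral, using Hua's bound $|S(a/q)| \lesssim q^{\epsilon - 1/d}$ in place of \eqref{e:sqrtcan}.
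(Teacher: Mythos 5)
Your proposal is correct, and it reaches the same conclusion via a genuinely different (though equally standard) mechanism. Both arguments begin by splitting the sum over $n$ into residue classes mod $q$ so the complete Gauss sum $S(a/q)$ factors out; the difference is in how the inner sum over a fixed residue class is compared to the oscillatory integral $V_k$. The paper first reduces to the sharp cutoff $\phi_k = 2^{-k}\mathbf{1}_{[1,2^k]}$ by a summation-by-parts argument, then observes that with $r = lq + p$ the phase satisfies $\alpha r^2 \equiv a/q \cdot p^2 + \eta(lq)^2 + O(2^{(2\epsilon_1-1)k}) \pmod 1$ on the major arc, and finally performs a direct Riemann-sum comparison of $\frac{q}{2^k}\sum_{l\leq 2^k/q}e(-\eta(lq)^2)$ with $V_k(\eta)$, using that the phase $l\mapsto \eta q^2 l^2$ has derivative $O(2^{(2\epsilon_1-1)k})$. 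You instead keep the Schwartz bump $\phi$ throughout and apply Poisson summation on each residue class, isolating the $j=0$ frequency as the main term and controlling the $j\neq 0$ frequencies by the rapid decay of $\widehat{\phi_k(\cdot)e(-(\cdot)^2\theta)}$ at $|\xi|\gtrsim 1/q \gg 2^{(\epsilon_1-1)k}$. Your route exploits smoothness and gives a superpolynomially small error at the nonzero Poisson frequencies (much better than the stated $O(2^{(2\epsilon_1-1)k})$, which the paper actually incurs from its cruder Riemann-sum and phase-truncation steps), at the cost of tracking the chirp factor $e(-t^2\theta)$ in the non-stationary phase estimate; the paper's route avoids Poisson summation entirely and is more elementary, but needs the extra summation-by-parts reduction and is tied to an $O(2^{(2\epsilon_1-1)k})$ error. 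The two approaches are equivalent in strength for the purposes of Proposition~\ref{MAINNT}, and your remark about the extension to general $P$ of degree $d$ (Hua's bound replacing the Gauss-sum bound) also matches the paper's indicated generalization.
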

The proof of this proposition is entirely elementary.
\begin{proof}
By a summation by parts argument, taking into account the smoothness of the function $\phi$, it suffices to assume that 
\[ \phi_k := \frac{1}{2^k} \mathbf{1}_{[1,2^k]}.\]

Now, set $\alpha = a/q + \eta$, where $|\eta| \lesssim 2^{(\epsilon_1 -2)k}$. Working $\mod 1$, and expressing $r = lq + p$, we have
\[ \alpha \cdot r^2 = (a/q + \eta) \cdot (lq + p)^2 \equiv a/q \cdot p^2 + \eta \cdot (lq)^2 + O(2^{(2\epsilon_1 - 1)k}).\]
Consequently, we may express
\[ \aligned 
\widehat{K_k}(\alpha) &= \frac{1}{2^k} \sum_{n \leq 2^k} e(-\alpha n^2) \\
&= \sum_{ p \leq q} e(-a/q \cdot p^2) \sum_{l \leq 2^k/q} e(-\eta \cdot (lq)^2) + O( 2^{(2\epsilon_1 -1)k}), \endaligned\] 
where we have inserted and removed some terms, all absorbed into the big-$O$ notation.
Since the phase $l \mapsto \eta q^2 \cdot l^2$ has a tiny derivative (on the order of $2^{(2\epsilon_1 - 1)k}$), we may use a Riemann-sum approximation to replace
\[ \frac{q}{2^k} \sum_{l \leq 2^k/q} e(-\eta \cdot (lq)^2) = V_k(\eta) + O(2^{(2\epsilon_1 - 1)k}),\]
from which the result follows.
\end{proof}

In light of this proposition, we know how to locally approximate $\widehat{K_k}$. To approximate it \emph{globally}, we need to ``patch together'' our local approximates. Doing so takes a little more notation.

First, we collect the major boxes 
\begin{equation}\label{e:Major}
 \mathfrak{M}_k := \bigcup_{s \leq \epsilon_0 k} \bigcup_{a/q \in \mathcal{R}_s}  \mathfrak{M}_k(a/q). 
\end{equation}

For $s \leq \epsilon_0 k$, we define the multipliers
\begin{equation}\label{L_{k,s}}
L_{k,s}(\alpha) := \sum_{a/q \in \mathcal{R}_s} S(a/q) V_k(\alpha - a/q) \chi_k(\alpha - a/q)
\end{equation}
where $\chi_k(\alpha) := \chi(2^{(2-\epsilon_1)k} \alpha)$, where $\epsilon_1$ is as in the definition of the major arcs, \eqref{e:major}. Note that the sum over $a/q \in \mathcal{R}_s$ in the definition of \eqref{L_{k,s}} is over disjointly supported terms.

We define
\begin{equation}\label{L_k}
L_k := \sum_{ s \leq \epsilon_0 k} L_{k,s}.
\end{equation}

Then, we have the following important proposition.

\begin{proposition}\label{MAINNT} There exists some $\kappa > 0$ so that
\[ \| \widehat{K_k} - L_k \|_{L^\infty(\mathbb{T})} \lesssim 2^{-\kappa k}.\]
\end{proposition}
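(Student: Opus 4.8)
The plan is to dyadically decompose the torus into the major arcs $\mathfrak{M}_k$ and their complement, the minor arcs, and to verify the claimed $L^\infty$ bound separately on each region. On the minor arcs $\mathbb{T}\setminus\mathfrak{M}_k$ we must show that both $\widehat{K_k}$ and $L_k$ are each $O(2^{-\kappa k})$; this is the heart of the matter. For $\widehat{K_k}$, the point is that if $\alpha$ lies outside every $\mathfrak{M}_k(a/q)$ with $q< 2^{\epsilon_0 k}$, then by Dirichlet's approximation theorem $\alpha$ has a rational approximation $a/q$ with $2^{\epsilon_0 k}\le q\le 2^{(2-\epsilon_1)k}$ and $|\alpha-a/q|\le 1/(q\cdot 2^{(2-\epsilon_1)k})$, and the classical Weyl-sum estimate for quadratic (more generally degree-$d$) exponential sums — exactly the $|S(a/q)|\lesssim q^{-\delta_d}$ phenomenon of \eqref{e:sqrtcan} combined with Hua's bound \cite[\S7, Thm 10.1]{Hua}, in its ``Weyl inequality'' incarnation for incomplete sums — gives $|\widehat{K_k}(\alpha)|\lesssim 2^{-\delta_d\epsilon_0 k}+2^{-\epsilon_1 k/2}\lesssim 2^{-\kappa k}$ for a suitable $\kappa$. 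For $L_k$ on the minor arcs: each summand $S(a/q)V_k(\alpha-a/q)\chi_k(\alpha-a/q)$ is supported in $\{|\alpha-a/q|\lesssim 2^{-(2-\epsilon_1)k}\}$, which is contained in $\mathfrak{M}_k(a/q)\subset\mathfrak{M}_k$ once $a/q\in\mathcal R_s$ with $s\le\epsilon_0 k$; hence $L_k$ is supported \emph{inside} $\mathfrak{M}_k$ and vanishes identically on the minor arcs, so there is nothing to prove there.

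On the major arcs, fix $\alpha\in\mathfrak{M}_k(a/q)$ with $a/q\in\mathcal R_s$, $2^{s-1}\le q<2^s$, $s\le\epsilon_0 k$. First observe that the major arcs $\mathfrak{M}_k(a/q)$ are pairwise disjoint: two distinct reduced fractions with denominators $<2^{\epsilon_0 k}$ differ by at least $2^{-2\epsilon_0 k}$, which exceeds $2\cdot 2^{(\epsilon_1-2)k}$ when $\epsilon_0,\epsilon_1$ are small relative to $1$. Consequently $\alpha$ lies in the support of exactly one summand of $L_k$ that could be nonzero near $\alpha$, namely the $a/q$ term, \emph{provided} the cutoff $\chi_k(\alpha-a/q)$ has not yet decayed — and on the slightly smaller arc where $\chi_k\equiv 1$ we get $L_k(\alpha)=S(a/q)V_k(\alpha-a/q)$ exactly, so Proposition \ref{p:onMAJ} gives $|\widehat{K_k}(\alpha)-L_k(\alpha)|\lesssim 2^{(2\epsilon_1-1)k}\lesssim 2^{-\kappa k}$. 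It remains to handle the transition region $2^{-(2-\epsilon_1)k}\lesssim|\alpha-a/q|\lesssim 2^{(\epsilon_1-2)k}$ where $\chi_k$ is between $0$ and $1$: there one writes $\widehat{K_k}(\alpha)-L_k(\alpha)=\big(\widehat{K_k}(\alpha)-S(a/q)V_k(\alpha-a/q)\big)+S(a/q)V_k(\alpha-a/q)(1-\chi_k(\alpha-a/q))$; the first term is $O(2^{(2\epsilon_1-1)k})$ by Proposition \ref{p:onMAJ}, and the second is controlled using $|S(a/q)|\lesssim q^{-1/2}\le 1$ together with the decay of the oscillatory integral $V_k(\xi)=\int e(-2^{2k}t^2\xi)\phi(t)\,dt$ by (non)stationary phase / van der Corput: $|V_k(\xi)|\lesssim (2^{2k}|\xi|)^{-1/2}\lesssim 2^{-\epsilon_1 k/2}$ once $|\xi|\gtrsim 2^{-(2-\epsilon_1)k}$. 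Taking $\kappa:=\tfrac12\min(\epsilon_1, \delta_d\epsilon_0)$ (say, up to harmless constants) and $\epsilon_1<1/2$ closes all cases.

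I expect the main obstacle to be the minor-arc estimate for $\widehat{K_k}$ — i.e.\ making precise the passage from ``$\alpha$ outside all major arcs'' to a genuine power saving via the Weyl/Hua inequality for the \emph{incomplete} sum $\sum_{n\le 2^k}e(-\alpha n^2)$, keeping careful track of how the exponent $\kappa$ depends on $\epsilon_0$, $\epsilon_1$, and the degree $d$, and of the requirement $k\gg_\epsilon\log N$ (which is only needed downstream, to guarantee $2^{\epsilon_0 k}$ dominates the denominators $2^s\le N$ appearing in the rational shifts from \S\ref{s:inc}; for the present proposition $k\to\infty$ alone suffices). The disjointness of the major arcs and the support containment $\mathrm{supp}\, L_{k,s}\subset\mathfrak{M}_k$ are routine bookkeeping, and Proposition \ref{p:onMAJ} does the entire job of the main-term comparison; the only genuine analytic input beyond it is the standard $|V_k(\xi)|\lesssim (2^{2k}|\xi|)^{-1/2}$ oscillatory decay, which is immediate from van der Corput's lemma since $\phi$ is a fixed bump.
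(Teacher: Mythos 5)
Your proof is correct and follows essentially the same decomposition as the paper's: major arcs handled via Proposition \ref{p:onMAJ}, minor arcs handled via Dirichlet's principle together with Weyl's inequality (Lemma \ref{WEYLL}), with the decay $|V_k(\xi)|\lesssim (2^{2k}|\xi|)^{-1/2}$ controlling the remaining pieces of $L_k$. You are in fact somewhat more careful than the paper in tracking the cutoff $\chi_k$ — noting that for a small enough constant $c$ in \eqref{chi} each $L_{k,s}$ is supported inside $\mathfrak{M}_k$ so that $L_k$ vanishes on the minor arcs, and treating the transition annulus of $\chi_k$ explicitly — whereas the paper handles the off-level contributions $L_{k,s}$, $s\neq s_0$, by the decay of $V_k$ alone without invoking the support of $\chi_k$; both routes close the argument.
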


Before turning to the proof of this theorem we first recall \emph{Weyl's Lemma}.

\begin{lemma}\label{WEYLL}[Weyl's Lemma]
Suppose $|a_d - a/q| \leq \frac{1}{q^2}$. Then
\[ \left| \sum_{n \leq N} e( a_d \cdot n^d + \dots a_1 n) \right| \lesssim_\epsilon N^{1+\epsilon}\left(\frac{1}{N} + \frac{1}{q} + \frac{q}{N^d} \right)^{\frac{1}{2^{d-1}}}.\]
\end{lemma}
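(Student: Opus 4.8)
The plan is to prove this by Weyl's classical differencing method. Write $f(n) := a_d n^d + \dots + a_1 n$ and $S := \sum_{n \leq N} e(f(n))$. The engine is the identity obtained by squaring and shifting:
\[ |S|^2 = \sum_{|h| < N} \sum_{n \in I_h} e\big( f(n+h) - f(n) \big), \]
where for each $h$ the interval $I_h$ has length $\leq N$ and $n \mapsto f(n+h)-f(n)$ is a polynomial in $n$ of degree $d-1$ with leading coefficient $d a_d h$. Iterating $d-1$ times — at each stage squaring, applying Cauchy--Schwarz to extract the new outer difference, and trivially bounding the shortened ranges — one reduces to a purely linear (geometric) exponential sum: after differencing along $h_1, \dots, h_{d-1}$ the surviving phase is linear in $n$ with leading coefficient $\theta := d!\, a_d\, h_1 h_2 \cdots h_{d-1}$, at the cost of a factor $(2N)^{2^{d-1}-d}$ accumulated from the repeated Cauchy--Schwarz. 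Bounding the resulting innermost sum by $\min(N, \|\theta\|^{-1})$ and isolating the ``diagonal'' contribution (some $h_i = 0$, where only the trivial bound $N$ is available), one arrives at
\[ |S|^{2^{d-1}} \lesssim N^{2^{d-1}-d}\Big( N^{d-1} + \sum_{0 < |h_1|,\dots,|h_{d-1}| < N} \min\big(N,\ \|d!\, a_d\, h_1 \cdots h_{d-1}\|^{-1}\big) \Big). \]

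Next I would collapse the $(d-1)$-fold sum to one dimension. Writing $h := h_1 \cdots h_{d-1}$, each value $|h| \lesssim N^{d-1}$ is attained with multiplicity $\lesssim_\epsilon N^\epsilon$ by the divisor bound, and (absorbing the constant $d!$) it remains to estimate $\sum_{1 \leq h \lesssim N^{d-1}} \min(N, \|a_d h\|^{-1})$. This is where the hypothesis $|a_d - a/q| \leq 1/q^2$ enters, via the standard Diophantine estimate: for $(a,q)=1$ and any $H \geq 1$,
\[ \sum_{h=1}^{H} \min\big( N,\ \|a_d h\|^{-1}\big) \lesssim \Big(1 + \frac{H}{q}\Big)\big( N + q \log 2q \big), \]
proved by splitting $[1,H]$ into $\lceil H/q \rceil$ blocks of length $q$; on each block $h \mapsto ah \bmod q$ is a bijection and $\|a_d h\|$ differs from $\|ah/q\|$ by $O(1/q)$, so a block contributes $\lesssim N + \sum_{1 \leq j \leq q/2} q/j \lesssim N + q \log 2q$.

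Combining these with $H \asymp N^{d-1}$ gives
\[ |S|^{2^{d-1}} \lesssim_\epsilon N^{2^{d-1}-d}\, N^{\epsilon}\Big( N^{d-1} + \big(1 + \tfrac{N^{d-1}}{q}\big)(N + q) \Big) \lesssim_\epsilon N^{2^{d-1}}\, N^{\epsilon}\Big( \frac{1}{N} + \frac{1}{q} + \frac{q}{N^d} \Big), \]
where $d \geq 2$ is used to absorb the leftover term $N^{1-d} \leq N^{-1}$. Taking $2^{d-1}$-th roots and relabelling $\epsilon$ yields the stated bound.

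I expect the main obstacle to be the bookkeeping in the iterated differencing step: tracking how the summation ranges contract (and confirming they never exceed $N$), checking that the leading coefficient after $j$ differences is the nonzero quantity $a_d \cdot \tfrac{d!}{(d-j)!}\, h_1 \cdots h_j$ so that the phase is genuinely linear and nondegenerate at the final stage, and correctly accounting for the power of $N$ surrendered to Cauchy--Schwarz each time. The Diophantine block-counting lemma of the second paragraph is also where some care is needed (endpoint blocks, and the terms where $\|a_d h\|$ is tiny), but it is entirely classical.
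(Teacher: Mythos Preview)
The paper does not actually prove this lemma; it is stated as the classical Weyl inequality and accompanied only by a remark that the exponent $1/2^{d-1}$ is not sharp (with a pointer to \cite{W}). Your proposal is precisely the standard Weyl differencing argument and is correct: the iterated squaring and Cauchy--Schwarz yield
\[
|S|^{2^{d-1}} \lesssim N^{2^{d-1}-d}\Big( N^{d-1} + \sum_{0<|h_1|,\dots,|h_{d-1}|<N}\min\big(N,\ \|d!\,a_d\,h_1\cdots h_{d-1}\|^{-1}\big)\Big),
\]
the divisor bound collapses the product variable at cost $N^\epsilon$, and the block-counting estimate under $|a_d-a/q|\le 1/q^2$ finishes the Diophantine sum. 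One small point worth making explicit when you write it out: to ``absorb the constant $d!$'' cleanly, note $\|d!\,a_d\,h\|=\|a_d\,(d!\,h)\|$ and enlarge the range of summation to $d!\,H$, which only affects implicit constants depending on $d$. With that, the argument is complete and is exactly the proof the paper is implicitly invoking.
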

\begin{remark}
The power $\frac{1}{2^{d-1}}$ is classical, but not sharp; see \cite{W} for an improvement.
\end{remark}

With this in mind, we turn to the proof of Theorem \ref{MAINNT}.

\begin{proof}[Proof of Theorem \ref{MAINNT}]
We will establish this in cases. First we assume that $a/q \in \mathcal{R}_{s_0}$ for some $s_0 \leq \epsilon_0 k$, and consider the behavior of $\widehat{K_k} - L_k$ on $\mathfrak{M}_k(a/q)$.

First observe that 
\[ \widehat{K_k} - L_{k,s_0} \]
vanishes
on $\mathfrak{M}_k(a/q)$. 
Suppose now that $s \neq s_0 \leq \epsilon k$. Then
\begin{equation}\label{dec}
\sup_{\alpha \in \mathfrak{M}_k(a/q) } |L_{k,s}(\beta)| \lesssim 2^{-s/2} \min_{b/r \in \mathcal{R}_s} |V_k(\alpha - b/r)| \lesssim 2^{-s/2} \cdot 2^{(\epsilon_0 - 1)k},
\end{equation}
which sums nicely over $s$; note our use of the lower bound
\[ \aligned 
|\alpha - b/r| &\geq |a/q - b/r| - 2^{(\epsilon_1-2)k} \\
& \qquad \geq \frac{1}{qr} - 2^{(\epsilon_1-2)k} \gtrsim 2^{-2 \epsilon_0 k}, \endaligned \]
for $\alpha \in \mathfrak{M}_k(a/q)$.

On the minor arcs, $\alpha \notin \mathfrak{M}_k$, a similar calculation to \eqref{dec} shows that $L_k$ has a power savings; as for $\widehat{K_k}$, by Dirichlet's principle one may choose a reduced rational, $b/r$, with $r \lesssim 2^{(2-\epsilon_1)k}$, so that
\[ | \alpha - b/r| \lesssim \frac{1}{r 2^{(2-\epsilon_1)k}} \leq \frac{1}{r^2}.\]
We are done unless $r \lesssim 2^{\epsilon_0 k}$ by Weyl's Lemma \ref{WEYLL}. But in this case, we have shown that $\alpha \in \mathfrak{M}_k(b/r)$, which is a contradiction.
\end{proof}

The upshot from this discussion is that we may pass from the maximal function
\[ \sup_{v} | \left( \widehat{K_k}(v \cdot ) \hat{f} \right)^{\vee} | \]
to the analytic approximate
\begin{equation}\label{e:analmax}
\sup_{v} | \left( \widehat{L_k}(v \cdot ) \hat{f} \right)^{\vee} |,
\end{equation}
via a square function argument, since
\[ \left( \sum_{v} | \left( \big(L_k - \widehat{K_k}\big)(v \cdot ) \hat{f} \right)^{\vee} |^2 \right)^{1/2}\]
is bounded on $\ell^2$ for $k \gg \log N$ sufficiently large.

In the next subsection, we will turn our attention to the maximal function, \eqref{e:analmax}.
\subsection{Estimating The Maximal function}
In light of the previous section, henceforth we need only consider the sum in $1 \leq s \leq \epsilon_0 k$ of the following operators,
\begin{equation}\label{e:maxPrimes-s}
\mathcal{L}_{V,k,s} f:= \sup_{v} \left| \left(L_{k,s}( v \cdot \beta) \hat{f}(\beta) \right)^{\vee} \right|
\end{equation}

With this in mind, we are ready for the proof of Theorem \ref{t:2avg}.

\subsubsection{The Proof of Theorem \ref{t:2avg}}
It suffices to prove the following proposition.
\begin{proposition}\label{p:2est}
Let $\epsilon > 0$ be arbitrary. Then there exists a set $|V| = N$ so that for each $1 \leq s \leq \epsilon_0 k$, with $2^{\epsilon s} \leq N \leq 2^{s/\epsilon}$, we may bound
\begin{equation}\label{e:Lprimesgoal}
\| \mathcal{L}_{V,k,s} f \|_2 \lesssim 
N^{\epsilon} \cdot \|f \|_2.
\end{equation}
\end{proposition}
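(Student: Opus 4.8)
The plan is to reduce \eqref{e:Lprimesgoal} to the linear (non-Radon) incidence machinery of \S\ref{s:inc}. The multiplier $L_{k,s}(\alpha)$ is supported on the $2^{(\epsilon_1-2)k}$-neighborhoods of the rationals $a/q$ with $2^{s-1}\le q<2^s$; the key structural point is that $L_{k,s}(v\cdot\beta)$ is nonzero precisely when $v\cdot\beta$ lies within $O(2^{-(2-\epsilon_1)k})$ of some $a/q$ with denominator of size $\approx 2^s$, i.e. when $\beta$ lies in one of the combs $K^A_{r,s,v}$ from \eqref{e:Kr} (after the rescaling to $A^{-1}V$; here we match the comb thickness parameter $2^{-C_1 s}$ against $2^{-(2-\epsilon_1)k}$, which is legitimate because $k\gg_\epsilon\log N\ge \log(2^s)/\epsilon$ forces $2^{(2-\epsilon_1)k}$ to dwarf any fixed power of $2^s$). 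First I would split off the contribution near the origin $|\beta|\lesssim A^{-2}$: on that ball one uses the trivial $\ell^2$ bound together with the rapid decay of $V_k$ and the square-root cancellation $|S(a/q)|\lesssim q^{-1/2}$ from \eqref{e:sqrtcan} — summing $2^{-s/2}$ over the $\le 2^{2s}$ rationals in $\mathcal R_s$ gives a bound $\lesssim 2^{s/2}\lesssim N^{1/2}$, which is not good enough alone, so this small ball must instead be absorbed by noting that the maximal operator restricted there is controlled by the linear maximal function $A_{V,k'}$ for a comparable $k'$, handled by Theorem~\ref{linthm}.

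Away from the origin, I would dominate the supremum over $v$ by the $\ell^2(V)$-square function and invoke the Magyar–Stein–Wainger transference lemma (Lemma~\ref{trans}) with $B_1=\C$, $B_2=\ell^\infty(V)$, exactly as in \S\ref{s:easyproof} for \eqref{e:term2}: the continuous multiplier $\{\hat\varphi(\beta)\sum_m S(a/q)V_k(v\cdot\beta - m - \eta)\}$ (summed over $a/q\in\mathcal R_s$) is, up to the harmless Schwartz tail $\varphi$, bounded into $L^2(\R^2;\ell^\infty(V))$ by the continuous directional maximal theory with a loss of $\lesssim_\epsilon N^\epsilon$ — here one uses $|S(a/q)|\lesssim 2^{-s/2}$ to beat the multiplicity $r\approx 2^s$ of distinct shifts $b/r$ for each direction, so the effective number of ``tubes per direction'' contributing at full strength is $O(1)$ rather than $O(2^s)$. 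The genuinely arithmetic part is the periodized/diagonal error: the difference between $L_{k,s}(v\cdot\beta)$ and its Euclidean model is supported, as a set in $\beta$, on $\bigcup_{v}K^A_{r,s,v}$, and after dominating $\sup_v$ by $\ell^2(V)$ and applying Plancherel its operator norm squared is $\lesssim \big\|\sum_{2^{s-1}\le r<2^s,\,v\in V}\mathbf 1_{K^A_{r,s,v}}\big\|_{L^\infty(\mathbb T^2\setminus B_{A^{-2}})}\cdot(\max_{a/q}|S(a/q)|^2)$, and the first factor is exactly $C(s,V)\lesssim_\epsilon N^\epsilon$ by Proposition~\ref{t:C(s,N)fornow1}, while the $|S(a/q)|^2\lesssim 2^{-s}$ factor even gives room to spare.

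Carrying this out, I would: (i) fix the set $V=V_{N,\epsilon}$ produced by Proposition~\ref{t:C(s,N)fornow1} (the same one as in Theorem~\ref{linthm}); (ii) decompose $\mathcal L_{V,k,s}f$ into the near-origin piece, the Euclidean-model piece, and the diagonal/periodization error, as above; (iii) bound the near-origin piece by $A_{V,k}$ and cite Theorem~\ref{linthm}; (iv) bound the Euclidean-model piece by transference (Lemma~\ref{trans}) plus the continuous directional maximal bound of Katz/Demeter \cite{Katz,D}, noting the $2^{-s/2}$ gain collapses the per-direction multiplicity; (v) bound the error piece by Plancherel plus the incidence estimate $C(s,V)\lesssim_\epsilon N^\epsilon$. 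Summing over the $O(\epsilon_0 k)$ values of $s$ costs only a factor $k\lesssim_\epsilon N^{o(1)}$, which is absorbed into $N^{O(\epsilon)}$ after relabeling $\epsilon$; combined with Proposition~\ref{MAINNT} and the square-function reduction from $\widehat{K_k}$ to $L_k$ this yields Theorem~\ref{t:2avg}, and Theorem~\ref{t:avg} follows by replacing $S(a/q)$ with the Hua bound $q^{-\delta_d}$ and $V_k$ with the corresponding oscillatory integral, since only $\delta_d>0$ (not $\delta_d=1/2$) was used.

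I expect step (iv) — making the transference-plus-continuous-theory argument honest — to be the main technical obstacle: one must check that the periodized multiplier $\sum_{l\in\Z^2}m(\beta-l)$ arising from the $L_{k,s}$ blocks genuinely has the product form $\hat\varphi\cdot(\text{directional average})$ up to acceptable errors, that the Schwartz cutoff $\varphi$ can be inserted at scale $A^{-1}2^{-(2-\epsilon_1)k}$ without disturbing the major-arc structure, and that the $\ell^\infty(V)$-valued continuous bound really only loses $N^\epsilon$ and not $N^{1/2}$ — this is where the $|S(a/q)|\lesssim q^{-1/2}$ cancellation is essential and must be tracked carefully against the $r\approx 2^s$ shifts. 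The near-origin piece (step (iii)) is a minor nuisance but conceptually routine.
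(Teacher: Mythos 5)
Your proposal contains the paper's actual argument as one of its three pieces, but you have buried it under an unnecessary decomposition and then flagged the unnecessary part as the main technical obstacle. Let me explain the discrepancy.

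The paper's proof of Proposition~\ref{p:2est} has \emph{no} Euclidean-model/transference step at all. It proceeds by: (a) restricting $\hat f$ by denominator, $\widehat{f_q} := \hat f\cdot\mathbf 1_{\bigcup_v (K^A_{q,s,v})'}$, using reduced rationals so that for each fixed $v$ the indicators $\mathbf 1_{(K^A_{q,s,v})'}$ are pairwise disjoint in $q$; (b) dominating $\sup_v$ by the $\ell^2(V)$ square function and applying Plancherel; (c) using the disjointness in $q$ (and in the arcs themselves, which are separated by $\gtrsim 2^{-2s}\gg 2^{-(2-\epsilon_1)k}$) together with $|S(a/q)|\lesssim 2^{-s/2}$ to bound the multiplier pointwise by $2^{-s/2}\,\mathbf 1_{(K^A_{q,s,v})'}$; (d) extracting $\sup_\beta\sum_{v,q}\mathbf 1_{(K^A_{q,s,v})'}(\beta)\lesssim_\epsilon N^\epsilon$ from Proposition~\ref{t:C(s,N)fornow1}. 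The net bound is $2^{-s/2}\cdot N^{\epsilon}$, and one is done. This is precisely the computation you performed on your ``periodization/diagonal error'' piece --- which is in fact the whole operator, not a correction term.

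The reason no transference is needed is the scale restriction $2^{\epsilon s}\le N\le 2^{s/\epsilon}$: the denominators $q\approx 2^s$ are large, so the rationals $a/q$ (reduced, $q\ge 2$) sit at distance $\gtrsim 2^{-s}$ from $\mathbb Z$, while near the origin $|v\cdot\beta|\lesssim A^{-1}\ll 2^{-s}$ once $A\ge N^{C_0}\gg 2^s$. Hence $L_{k,s}(v\cdot\beta)$ vanishes identically on $B_{A^{-2}}(0)$ and your near-origin piece is void, not merely ``a minor nuisance.'' By contrast, in the proof of Theorem~\ref{linthm} the multiplier \emph{is} supported near $\Z$, and there the $N$ combs all overlap near the origin, so square-summing over $V$ there would cost a raw $N^{1/2}$; that is the one place transference earns its keep. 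In the setting of Proposition~\ref{p:2est}, square-summing loses no such factor because the incidence count already controls the overlap, and the $2^{-s/2}$ Gauss-sum gain makes the bound strictly better than required. (Transference does reappear in the reduction ``Proof of Theorem~\ref{t:2avg} Assuming Proposition~\ref{p:2est}'' when treating $s\ll\log N$, via the triangle inequality over $\mathcal R_s$ and an appeal to Theorem~\ref{linthm}; but that is outside the scope of the proposition you are proving.)

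So the concrete gap in your write-up is step (iv): you have not made the Euclidean-model/transference bound honest, you correctly suspect it is delicate, and in fact it is not needed. Delete it, note that $L_{k,s}$ vanishes on $B_{A^{-2}}(0)$, and apply your step (v) estimate to the full multiplier $L_{k,s}(v\cdot\beta)$ directly; what remains is the paper's proof.
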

Indeed, with this proposition in hand, we can quickly deduce Theorem \ref{t:2avg}.
\begin{proof}[Proof of Theorem \ref{t:2avg} Assuming Proposition \ref{p:2est}]
For $s \ll \log N$, we use the triangle inequality to simply bound,
\[ |L_{k,s} f| \lesssim 2^{-s/2} \cdot \sum_{a/q \in \mathcal{R}_s} \sup_v \left| \sum_m \left( V_k(v \cdot \beta - a/q -m ) \chi_k(v \cdot \beta - a/q -m) \hat{f}(\beta) \right)^{\vee} \right| \]
and apply the linear theory, Theorem \ref{linthm}, to control 
\[ \| \sup_v \left| \sum_m \left( V_k(v \cdot \beta - a/q -m) \chi_k(v \cdot \beta - a/q-m) \hat{f}(\beta) \right)^{\vee} \right| \|_{\ell^2} \lesssim N^{\epsilon} \| f\|_{\ell^2}\]
by convexity. In particular, an upper bound for the contribution of $s \ll \log N$ is 
\[ N^{C \cdot \epsilon}  \cdot \|f \|_2.\]
For $s \gg \log N$, we replace the supremum in $v \in V$ with a square-sum,
\[ |L_{k,s} f|^2 \leq \sum_{v} \left| \sum_m \sum_{a/q \in \mathcal{R}_s} S(a/q) \left( V_k(v \cdot \beta - a/q-m) \chi_s(v \cdot \beta - a/q-m) \hat{f}(\beta) \right)^{\vee} \right|^2,\]
and then take into account the geometric decay of the Weyl sums,
\[ \max_{a/q \in \mathcal{R}_s} |S(a/q)| \lesssim 2^{-s/2}.\]
\end{proof}

It remains to prove Proposition \ref{p:2est}
\begin{proof}[The Proof of Proposition \ref{p:2est}]
For each $2^{s-1} \leq q < 2^s$, let
\[ \widehat{f_q} := \hat{f} \cdot \mathbf{1}_{ \bigcup_v (K^A_{q, s, v})' }, \]
where $K^A_{q, s, v}$ is defined in \eqref{e:Kr}, and $(K^A_{q, s, v})'$ is defined similarly, but only \emph{reduced} rationals $a/q, \ (a,q) = 1$ are considered; this is done so that for each $v \in V$,
\[ \sum_{q= 2^{s-1} }^{2^{s}-1} \mathbf{1}_{(K^A_{q, s, v})'} \]
is bounded by $1$ on $\mathbb{T}^2$.

We now have
\[
\aligned 
& \sup_{v} \left| \left(L_{k,s}( v \cdot \beta) \hat{f}(\beta) \right)^{\vee} \right| \\
& \qquad =
\sup_{v} \left| \left( \sum_m \sum_{2^{s-1} \leq q < 2^s } \sum_{a \leq q, (a,q) = 1} S(a/q) V_k(v \cdot \beta - a/q -m) \chi_k(v\cdot \beta - a/q - m) \widehat{f_q}(\beta) \right)^{\vee} \right|. \endaligned \]

We now simply replace the supremum in $v$ with a square sum; by the same reductions that we used in the proof of Theorem \ref{linthm}, it suffices to turn our attention to
\[ \sup_{\beta\gtrsim A^{-2}} \ 2^{-s/2} \cdot \left( \sum_{v} \sum_q \mathbf{1}_{(K^A_{q, s, v})'} \right)^{1/2}(\beta);\]
but this final expression is bounded by 
\[ 2^{-s/2} \cdot N^{\epsilon} \]
using the incidence estimate from Proposition \ref{t:C(s,N)fornow1}; replacing the sequence $n \mapsto n^2$ sees minor changes in the number theoretic approximation, see \cite[\S 5-6]{B3}, and replacing the gain of $2^{-s/2}$ from the Gauss sums by the corresponding more modest gain, of (say)
\[ 2^{- \frac{s}{2 \cdot \text{deg}(P)} }.\]
The proof is complete.
\end{proof}

\typeout{get arXiv to do 4 passes: Label(s) may have changed. Rerun}

\end{document}